\newcommand{\Z}{\mathbb{Z}}
\newcommand{\E}{\mathbb{E}}
\newcommand{\N}{\mathbb{N}}
\newcommand{\Q}{\mathbb{Q}}
\renewcommand{\Im}{\text{Im}}
\renewcommand{\epsilon}{\varepsilon}
\newcommand{\biglink}{\bigcap_{i = 0}^{d - 1} \link([x_0, ..., \hat{x_i}, ..., x_{d - 1}], Y)}
\DeclareMathOperator{\link}{link}
\DeclareMathOperator{\del}{del}
\newcommand\polymake{\texttt{polymake}\xspace}
\newcounter{Results}[section] 
\newtheorem{theorem}[Results]{Theorem}
\newtheorem{lemma}[Results]{Lemma}
\newtheorem{proposition}[Results]{Proposition}
\newtheorem{corollary}[Results]{Corollary}
\newtheorem{claim}[Results]{Claim}
\newtheorem*{exercise*}{Exercise}
\theoremstyle{remark}
\newtheorem{remark}[Results]{Remark}
\newtheorem*{remark*}{Remark}
\theoremstyle{definition}
\newtheorem{definition}[Results]{Definition}
\newtheorem*{definition*}{Definition}
\newtheorem*{example*}{Example}
\newtheorem*{notation}{Notation}
\numberwithin{equation}{section}
\title{The worst way to collapse a simplex}
\author{Davide Lofano \thanks{Technische Universit\"{a}t Berlin,  Chair of Discrete Mathematics/Geometry,  Strasse des 17.~Juni 136, 10623 Berlin, Germany. Supported by the Deutsche Forschungsgemeinschaft (DFG, German Research Foundation) Graduiertenkolleg ``Facets of Complexity" (GRK 2434).} \and Andrew Newman \footnotemark[1]}
\begin{document}
\maketitle
\abstract
 In general a contractible simplicial complex need not be collapsible. Moreover, there exist complexes which are collapsible but even so admit a collapsing sequence where one ``gets stuck", that is one can choose the collapses in such a way that one arrives at a nontrivial complex which admits no collapsing moves. Here we examine this phenomenon in the case of a simplex. In particular we characterize all values of $n$ and $d$ so that the $n$-simplex may collapse to a $d$-complex from which no further collapses are possible. Equivalently, and in the language of high-dimensional generalizations of trees, we construct hypertrees that are anticollapsible, but not collapsible.

\section{Introduction}
A standard notion in computational topology is that of collapsibility, first introduced by Whitehead \cite{whitehead1939simplicial}. In the setting of simplicial complexes, collapsibility is a completely combinatorial notion. Recall that an abstract simplicial complex on a ground set $V$ is a collection of sets, called faces, in $2^V$ that is closed under taking subsets. For a simplicial complex $X$, a nonempty face of $X$ is said to be \emph{free} provided that is a proper subset of exactly one other face in $X$. An $\emph{elementary collapse}$ of $X$ is the process of removing a free face and the unique face properly containing it. 

We overview the relevant topological notions in Section \ref{Sec:Preliminaries}. Generally speaking though, elementary collapses preserve many topological invariants that one might want to compute for some simplicial complex. The strategy from a computational topology perspective, therefore, becomes to use elementary collapses to reduce some initial complex $X$ to a smaller complex $Y$ where the relevant topological invariants may be more easily computed. The best-case scenario for such an approach is when $X$ is \emph{collapsible}.  A simplicial complex is said to be collapsible if there exists a sequence of elementary collapses that reduce the complex to a single vertex.

The notion of collapsibility is contrasted with the purely topological notion of contractibility. It is easy to check that contractibility implies collapsibility, but the converse is not true in general. The smallest such example is the dunce hat \cite{zeeman1963dunce}, which we will discuss in more detail below, a 2-dimensional contractible complex with triangulations on 8 vertices which are not collapsible. For smaller simplicial complexes a result of \cite{bagchi2005combinatorial} shows that for simplicial complexes on 7 or fewer vertices contractibility and collapsibility are equivalent. 

While collapsibility and contractibility are not equivalent in general, more subtlety there even exists simplicial complexes which are collapsible, but for which it is possible to choose a sequence of elementary collapses after which one gets stuck at a nontrivial complex that is not collapsible. Formally, we say that a collapsing sequence on a complex $X$ gets stuck at a complex $Y$, if the collapsing sequence reduces $X$ to $Y$ and $Y$ has no free faces. More broadly, we say that a collapsing sequence on $X$ gets stuck at dimension $d$ if the collapsing sequence gets stuck at some $d$-dimensional complex.

There are examples of this even in the case of a simplex. Recall that the simplex on $n$ vertices, denoted $\Delta_{n - 1}$ is the simplicial complex on ground set $\{1, .., n\}$ where every subset of $\{1, ..., n\}$ is a face. It is easy to see that $\Delta_{n - 1}$ is collapsible. Indeed one may choose a vertex and proceed dimension-wise, collapsing at each step the free maximal faces that do not contain the chosen vertex. However, \cite{benedetti2009dunce} shows that there exists a sequence of elementary collapses from the 7-simplex to a triangulation of the dunce hat, so from there no further collapses are possible even though the 7-simplex is collapsible. This notion is central in the implicit question in the title of our paper, what is the worst way to collapse a simplex? That is starting from the simplex $\Delta_{n - 1}$ what is the maximal dimension $d$ in which a collapsing sequence may get stuck? We answer this question with the following theorem:

\begin{theorem}\label{theorem2}
For $n \geq 8$ and $d \notin \{1, n - 3, n - 2, n - 1\}$, there exists a collapsing sequence of the simplex on $n$ vertices which gets stuck at a $d$-dimensional complex on $n$ vertices. Moreover for $n \leq 7$ and $d$ arbitrary or $n$ arbitrary and $d \in \{1, n - 3, n - 2, n - 1\}$ it is not possible to find a collapsing sequence of the simplex on $n$ vertices which gets stuck at dimension $d$. 
\end{theorem}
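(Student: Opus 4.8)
The plan is to treat the two directions separately: first the impossibility statements (the ``moreover''), which I expect to be clean, and then the existence construction, whose difficulty is concentrated in realizing collapses of the simplex explicitly. Throughout I use that a stuck complex $Y$ reached from $\Delta_{n-1}$ is contractible (collapses preserve homotopy type), nontrivial, and has no free faces. Three of the impossibilities are cheap. For $n\le 7$, the result of \cite{bagchi2005combinatorial} says contractibility and collapsibility coincide; but a nontrivial collapsible complex must have a free face to begin any collapse, so no stuck contractible $Y$ exists. For $d=1$, $Y$ would be a nontrivial tree, which always has a leaf, i.e.\ a free vertex. For $d=n-1$, the only complex on $n$ vertices of dimension $n-1$ is $\Delta_{n-1}$ itself, which has free faces, and any proper collapse drops the dimension to at most $n-2$.

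The two remaining impossibilities, $d=n-2$ and $d=n-3$, I would handle by analyzing the top two layers of $Y$ through complementation. For $d=n-2$ the facets are the $(n-1)$-subsets $[n]\setminus\{v\}$ for $v$ ranging over some $S\subseteq[n]$, and a present ridge $[n]\setminus\{u,v\}$ lies in exactly $|\{u,v\}\cap S|$ facets; the absence of a free ridge forces $S=[n]$ (or $S=\emptyset$, which lowers the dimension), making $Y$ the entire $(n-2)$-skeleton $\partial\Delta_{n-1}\cong S^{n-2}$, which contradicts acyclicity. For $d=n-3$ the facets are $(n-2)$-subsets whose complementary pairs form a graph $G$ on $[n]$, and a present ridge with complementary triple $T$ lies in exactly $|E(G)\cap\binom{T}{2}|$ facets; no free ridge means no triple spans exactly one edge of $G$, which is equivalent to $G$ being complete multipartite with some parts of sizes $n_1,\dots,n_r$. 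A short computation, for instance Alexander duality against the disjoint union of simplices on the parts, then gives $\tilde{H}_{n-3}(Y)\cong\Z^{r-1}$, nonzero once $r\ge 2$; and $r=1$ means $G$ has no edges, again lowering the dimension. The one subtlety is that possible lower-dimensional facets of $Y$ do not interfere, since they add no $(n-3)$-face and leave $\ker\partial_{n-3}$ unchanged.

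For existence I would build all examples from a few seeds using operations that provably preserve both reachability from a simplex and the no-free-face property. The workhorse is the \emph{join}: if $\Delta_{m-1}\searrow Y_1$ and $\Delta_{k-1}\searrow Y_2$, then $\Delta_{m+k-1}=\Delta_{m-1}*\Delta_{k-1}\searrow Y_1*Y_2$ by the standard fact that $X\searrow X'$ implies $K*X\searrow K*X'$; moreover a direct case analysis shows that if neither factor has a free face then neither does $Y_1*Y_2$, because a free face of a join forces a free face in one factor. As the join of a contractible complex with anything is contractible, this yields new stuck, reachable, acyclic complexes with $(n,d)\mapsto(n_1+n_2,d_1+d_2+1)$. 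Joins alone are too expensive in vertices to approach the boundary $d=n-4$, so I would complement them with two vertex-efficient moves: a \emph{one-point suspension}, which is PL-homeomorphic to the suspension but adds a single vertex and raises the dimension by one (hence preserves the codimension $n-d$), and a \emph{vertex-addition} move raising $n$ by one with $d$ fixed (raising the codimension by one). Starting from base blocks in dimensions $2,3,4$ on eight vertices --- the dimension-$2$ seed being the dunce-hat collapse of \cite{benedetti2009dunce} with codimensions $6,5,4$ --- these two moves reach every $(n,d)$ with $2\le d\le n-4$: vertex-additions sweep out all admissible codimensions at a fixed small dimension, and one-point suspensions raise the dimension along lines of constant codimension.

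The main obstacle will be proving that the one-point suspension and the vertex-addition preserve \emph{reachability from a simplex}; the no-free-face half of each is a combinatorial check analogous to the join computation above. The danger is genuinely topological: the ordinary suspension fails here, since $\Delta^1*Y$ collapses onto $S^0*Y$ only when the link of the suspending edge --- which is precisely $Y$ --- is collapsible, whereas our $Y$ is stuck. The purpose of the one-point suspension is exactly to avoid creating an edge whose link is the non-collapsible complex $Y$, and I expect the key lemma to be that $\Delta_{n-1}\searrow Y$ implies $\Delta_n\searrow\mathrm{Susp}_1(Y)$, proved by promoting the given collapsing sequence of $\Delta_{n-1}$ and absorbing the single new vertex by an explicit cone-and-recollapse argument. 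Constructing the two extra base blocks in dimensions $3$ and $4$ on eight vertices --- small stuck, acyclic complexes that the $7$-simplex collapses onto --- is the other concrete task, which I would carry out explicitly and verify directly.
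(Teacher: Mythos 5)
Your architecture coincides with the paper's almost move for move. The impossibility half is handled identically: \cite{bagchi2005combinatorial} for $n\le 7$, leaves of trees for $d=1$, the unique top-dimensional complex for $d=n-1$, and for $d\in\{n-2,n-3\}$ your complement counts are the same as in Proposition \ref{Prop:NonExistence} (the $d=n-2$ case forces the full $(n-2)$-skeleton, i.e.\ a top-dimensional cycle; your ``complete multipartite'' reformulation of the $d=n-3$ case is the contrapositive of the paper's observation that a connected Alexander dual missing an edge contains an induced path on three vertices, whose complement is a free $(n-4)$-face). For existence, your two ``vertex-efficient moves'' are exactly the paper's: the one-point suspension is the construction $X^1_x=\{[a]\}*X_{x,b}\cup\{[b]\}*X_{x,a}$ of Definition \ref{def:Ind} and Lemma \ref{Cor:BigDim}, and the unspecified vertex-addition with $d$ fixed is realized in the paper as the bistellar-$0$/stacking move of Proposition \ref{Prop:SameDim}; the sweep over codimensions from base blocks at $(n,d)=(8,2),(8,3),(8,4)$ is the paper's induction verbatim. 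The join is extra and ultimately unused in your plan. You also correctly diagnose why the ordinary suspension fails.

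The proposal, however, defers exactly the two places where the substance lies, and these are genuine gaps rather than routine checks. First, the base blocks in dimensions $3$ and $4$ on eight vertices are asserted, not produced. These are delicate objects: the dimension-$4$ one sits at the extremal codimension $n-d=4$ (Proposition \ref{Prop:NonExistence} kills codimension $3$), and the paper obtains them as Alexander duals of explicit collapsible-but-not-anticollapsible hypertrees $Y^2_8$ and $Y^3_8$, the latter found by a randomized Kruskal-type search in which only $2$ of $10{,}000$ trials succeeded. Without them the induction cannot start on the codimension-$4$ and codimension-$5$ lines, which is precisely the boundary content of the theorem. Second, the key lemma that $\Delta_{n-1}\searrow Y$ implies $\Delta_n\searrow Y^1_x$ is only sketched, and the sketch glosses over the one real subtlety: one cannot symmetrically lift the collapsing sequence over both apexes $a$ and $b$ (the naive double lift is not even a matching). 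The paper lifts the full acyclic matching to the cells containing $a$ but only its restriction to $\del(x,X)$ to the remaining cells, and verifies acyclicity via the Patchwork Theorem \ref{teo:patchwork}, also checking that $(\Delta_{n-1})^1_x\cong\Delta_n$ so that anticollapsibility is genuinely preserved. So: right skeleton and the same route as the paper, but the two concrete inputs that make the theorem true at its edges are still missing.
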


Theorem \ref{theorem2} implies the following corollary. 

\begin{corollary}\label{theorem1}
For every $n \geq 8$ and $d$ with $d \notin \{1, n - 3, n - 2, n - 1\}$ there exists a contractible $d$-complex on $n$ vertices with no free faces. Moreover this is best possible, for $n \leq 7$ or $d \in \{1, n - 3, n - 2, n - 1\}$ every contractible $d$-complex on $n$ vertices has a free face.
\end{corollary}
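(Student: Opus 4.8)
The plan is to derive Corollary~\ref{theorem1} directly from Theorem~\ref{theorem2}, treating the two as essentially equivalent statements via the following dictionary: a complex with no free faces is exactly a complex admitting no elementary collapse, and the $n$-simplex is contractible with all its subcomplexes living on $n$ vertices. The key observation is that if a collapsing sequence starts at $\Delta_{n-1}$ and gets stuck at a $d$-dimensional complex $Y$, then $Y$ is the desired object: it is $d$-dimensional, it sits on (at most) $n$ vertices, and by definition of ``getting stuck'' it has no free faces.

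First I would establish the forward direction. Suppose $n \geq 8$ and $d \notin \{1, n-3, n-2, n-1\}$. By the existence half of Theorem~\ref{theorem2}, there is a collapsing sequence of $\Delta_{n-1}$ getting stuck at some $d$-dimensional complex $Y$ on $n$ vertices. I claim $Y$ is contractible. This is the crucial point to justify carefully: elementary collapses are simple-homotopy equivalences, hence in particular homotopy equivalences, so $Y$ is homotopy equivalent to $\Delta_{n-1}$, which is contractible (being a simplex). Therefore $Y$ is a contractible $d$-complex on $n$ vertices, and since the sequence got stuck at $Y$, it has no free faces. This gives exactly the object required by the corollary.

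Next I would address the ``best possible'' direction, which is where the only real subtlety lies. Here I cannot simply quote the impossibility half of Theorem~\ref{theorem2}, because that theorem speaks of complexes arising as the stuck endpoint of a collapsing sequence \emph{from the simplex}, whereas the corollary quantifies over \emph{all} contractible $d$-complexes on $n$ vertices. So the logical content to verify is: every contractible $d$-complex $Y$ on $n$ vertices with no free faces \emph{can be realized} as a stuck point of a collapse of $\Delta_{n-1}$, so that the impossibility in Theorem~\ref{theorem2} transfers. The mechanism I expect to use is that a contractible complex $Y$ on $n$ vertices is a full-dimensional (in the relevant sense) collapsible piece of the boundary data of the simplex: because $Y$ is contractible on ground set $\{1,\dots,n\}$, and the simplex $\Delta_{n-1}$ collapses onto any contractible subcomplex reachable by collapses, one should exhibit a collapsing sequence of $\Delta_{n-1}$ whose endpoint is $Y$ (or a complex of the same dimension with no free faces). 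If $Y$ is contractible and has no free faces, then in particular $Y \neq \Delta_{n-1}$, and the standard fact that the simplex collapses to any contractible subcomplex containing a spanning structure would complete the reduction.

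The main obstacle, and the step I would spend the most care on, is precisely this realizability claim in the converse direction, since the naive reading of the corollary does not mention collapsing from a simplex at all. I anticipate two possible routes. The cleaner route is to observe that the impossibility arguments behind Theorem~\ref{theorem2} for the excluded values of $d$ (namely $d \in \{1, n-3, n-2, n-1\}$ and the range $n \leq 7$) are really arguments about the existence of a free face in \emph{any} contractible $d$-complex on $n$ vertices, not merely those arising from the simplex; in that case the corollary's impossibility direction is an immediate restatement. The alternative route is to invoke the fact that any contractible complex $Y$ on $n$ vertices is a collapse-endpoint of $\Delta_{n-1}$ because $\Delta_{n-1}$ collapses to any contractible subcomplex on the same vertex set, so ``stuck at dimension $d$ for the simplex'' and ``exists a contractible $d$-complex with no free faces'' become genuinely equivalent. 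I would verify which of these routes the proof of Theorem~\ref{theorem2} actually supplies, and phrase the corollary's proof to match; the existence direction, by contrast, is routine once the simple-homotopy invariance of collapses is invoked.
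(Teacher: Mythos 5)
Your existence direction matches the paper: the stuck endpoint of a collapsing sequence from $\Delta_{n-1}$ is simple-homotopy equivalent to the simplex, hence contractible, and has no free faces by definition of getting stuck. For the converse you correctly spot the logical gap between the theorem (which only concerns stuck points of collapses of the simplex) and the corollary (which quantifies over all contractible $d$-complexes), and of your two proposed routes the first is the one the paper actually takes: the impossibility half of Theorem \ref{theorem2} is itself established by proving the stronger statement that every contractible complex on $n$ vertices of dimension $d \in \{n-3, n-2, n-1\}$ has a free face (Proposition \ref{Prop:NonExistence}), that a contractible $1$-complex is a tree and so has a leaf, and by citing \cite{bagchi2005combinatorial} for $n \leq 7$; the corollary's second half is then a restatement of those arguments rather than a formal consequence of the theorem's statement.

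Your second route, however, is not merely less clean but unavailable: it is \emph{not} true that $\Delta_{n-1}$ collapses to every contractible subcomplex on its vertex set. Reversing a collapsing sequence from $\Delta_{n-1}$ to $Y$ gives an anticollapsing sequence from $Y$ to $\Delta_{n-1}$, so your claim amounts to saying that every contractible complex is anticollapsible --- one of the implications the paper explicitly refutes (the Alexander dual of a triangulated dunce hat is contractible but admits no anticollapsing move at all). So the reduction ``every contractible $d$-complex with no free faces arises as a stuck point of a collapse of the simplex'' cannot be made, and you must commit to the first route.
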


Strictly speaking only the first part of Corollary \ref{theorem1} is implied by Theorem \ref{theorem2} and not the second part. However our proof of the second part of Theorem \ref{theorem2} proceeds by showing that for $n$ arbitrary and $d \in \{1, n - 3, n - 2, n - 1\}$ every contractible complex on $n$ vertices of dimension $d$ has a free face and for $n \leq 7$ we cite the previously mentioned result of  \cite{bagchi2005combinatorial}.


Another way to state Theorem \ref{theorem2} is in terms of \emph{anticollapsibility}. An elementary anticollapse is the reverse of an elementary collapse, we define this formally in Section \ref{section:DMT}, and we say that a complex $X$ on $n$ vertices is anticollapsible provided that there exists a sequence of anticollapsing moves from $X$ to $\Delta_{n - 1}$. Equivalently, there is a standard notion of Alexander duality for simplicial complexes, which we also discuss below, and $X$ is anticollapsible if and only if its Alexander dual is collapsible. In terms of anticollapsibility, Theorem \ref{theorem2} tells us that for $n \geq 8$ and $d \notin \{1, n - 3, n - 2, n - 1\}$, there exists a $d$-dimensional simplicial complex on $n$ vertices which is anticollapsible, but has no free faces. 

Within this framework of constructing complexes which satisfy some specified nonempty subset of the conditions of contractibility, collapsibility, and anticollapsibility, we are really considering properties of higher-dimensional generalizations of trees. For 1-dimensional complexes contractibility, collapsiblity, and anticollapsiblity are all equivalent and hold exactly for trees. Each of these properties may be generalized to higher dimensions to describe high-dimensional trees, but the properties are no longer equivalent. In general either collapsibility or anticollapsiblity will imply contractibility, but no other implications hold. High-dimensional trees have been studied from various viewpoints by, for example \cite{LinialPeled2019, Kalai1983, DuvalKlivansMartin2009, KahnSaksSturevant1984, adiprasito2017extremal, BjornerLutz2000, KLNP}. In Section \ref{section:hypertrees} we overview some of the literature on this topic, but for now we point out that our results in particular characterize values of $n$ and $d$ so that there are $d$-dimensional trees on $n$ vertices which are anticollapsible but not collapsible.

\section{Preliminaries}\label{Sec:Preliminaries}
We start with some standard notions in combinatorial topology, in particular notions from simplicial homology. Much more on these notions can be found in a standard reference such as \cite{Hatcher}. If $X$ is a finite simplicial complex and $\tau$ is a face of $X$, the \emph{dimension of $\tau$}, denoted $\dim \tau$, is $|\tau| - 1$, and the \emph{dimension of $X$}, denoted $\dim X$, is the maximum dimension of any face of $X$. A maximal face is called a \emph{facet}. A complex $X$ is said to be \emph{pure dimensional} if all of its facets are of the same dimension. The \emph{$j$-skeleton} of $X$, denoted $X^{(j)}$ is the subcomplex of $X$ given by every face of $X$ of dimension at most $j$; we say that the $j$-skeleton of $X$ is complete provided $X^{(j)}$ has all possible $j$-dimensional faces on the vertices of $X$. 

We are now ready to define simplicial homology, though as we define it here we will actually be describing \emph{reduced} simplicial homology, but this makes things easier to describe and works with the present setting. We begin with a simplicial complex $X$ and an abelian group of coefficients $R$; most commonly $R$ will be taken to be $\Z$, $\Q$ or a prime-order finite field. For each integer $i $ the set of $i$-chains denoted $C_i$ is the family of formal linear combinations of faces of $X$ of dimension $i$ with coefficients in $R$ (and $C_{-1}=R$). For each $i \geq 0$, one defines the $i$th boundary map to be the linear map $\partial_i : C_i \rightarrow C_{i - 1}$ given by sending the generator $[x_0, ..., x_i]$ to $\sum_{j = 0}^{i} (-1)^i [x_0, ..., \hat{x_j}, .., x_i]$ where $[x_0, ..., \hat{x_j}, .., x_i]$ denotes the face given by deleting $x_j$ from $[x_0, .., x_i]$. The $i$th homology group with coefficients in $R$ denoted $H_i(X; R)$ is defined to be the abelian group given by the quotient $\ker(\partial_i)/\Im(\partial_{i + 1})$ (it is routine to check that this quotient is well-defined). Without specifying the ring, the $i$th homology group of $X$, $H_i(X)$ is taken to be $H_i(X; \Z)$. A standard fact about homology is that $H_0(X) = 0$ if and only if $X$ is path-connected. In fact $H_0(X)$ is the free abelian group with rank equal to the number of connected components of $X$ minus one. It is also clear from the definition that homology groups vanish in dimensions larger than the dimension of $X$.

A complex is said to be $R$-acyclic if $H_i(X; R) = 0$ for all $i \geq 0$. Regarding the cases $R = \Q$ and $R = \Z$, we have that if $X$ is $\Z$-acyclic then it is necessarily $\Q$-acyclic, however the converse need not hold. A $\Q$-acyclic complex will necessarily have all homology groups finite, but they will not necessarily be trivial; this follows by the standard universal coefficient theorem for homology. For 1-dimensional complexes (i.e. for graphs), however $\Z$-acyclic and $\Q$-acyclic are equivalent and hold precisely when the graph is a tree.

Another notion we have not yet defined is that of contractibility. To be properly defined for a topological space, one has to begin with defining homotopy equivalence. For our purposes here we omit some of the technicalities and only say that it is standard that if two complexes are homotopy equivalent they have the same homology groups. For the most part all homotopy equivalences here will be \emph{simple-homotopy equivalences}. Two simplicial complexes $X$ and $Y$ are said to be simple-homotopy equivalent provided there is a sequence of collapses and anticollapses from $X$ to $Y$. A space is said to be contractible if it is homotopy equivalent to a point. For our purposes here we will often verify contractibility of complexes by checking anticollapsibility. An anticollapsible complex is simple-homotopy equivalent to a simplex, and a simplex is collapsible, so it is simple-homotopy equivalent to a point. 

\subsection{Discrete Morse Theory}\label{section:DMT}

We recall here the main concepts of Forman's discrete Morse theory \cite{forman1998morse,forman2002user}.
We follow the point of view of Chari \cite{chari2000discrete} and the book by Kozlov \cite{kozlov2007combinatorial}, using acyclic matchings instead of discrete Morse functions.

Let $X$ be a simplicial complex, $P$ be the poset of faces of $X$, and $G$ the Hasse diagram of $P$, i.e.\ the graph with vertex set the simplices of $X$ and having an edge $(\tau, \sigma)$ whenever $\tau \subset \sigma$ and $\dim \sigma = \dim \tau +1$. Moreover let us denote by $E$ the set of edges of $G$.

Given a subset $M$ of $E$, we can orient all edges of $G$ in the following way: an edge $(\tau,\sigma) \in E$ is oriented from $\tau$ to $\sigma$ if the pair does not belong to $M$, otherwise in the opposite direction.
Denote this oriented graph by $G_{M}$.

\begin{definition}[Acyclic matching \cite{chari2000discrete}]
  A \emph{matching} on $G$ is a subset $M \subseteq E$ such that every face of $X$ appears in at most one edge of $M$.
  A matching $M$ is \emph{acyclic} if the graph $G_{M}$ has no directed cycle.
\end{definition}

Given a matching $M$ on $G$, an \emph{alternating path} is a directed path in $G_{M}$ such that two consecutive edges of the path do not belong both to $M$ or both to $E \setminus M$.
The faces of $X$ that do not appear in any edge of $M$ are called \emph{critical} with respect to the matching $M$.

We are now ready to state the main theorem of discrete Morse theory. 

\begin{theorem}[\cite{forman1998morse,chari2000discrete}]
  Let $X$ be a simplicial complex, and let $P$ be its poset of faces, and $G$ the Hasse diagram of $P$.
  If $M$ is an acyclic matching on $G$, then $X$ is homotopy equivalent to a CW complex $X_M$, called the \emph{Morse complex} of $M$, with cells in dimension-preserving bijection with the critical cells of~$X$. Furthermore if the critical cells forms a subcomplex $X_c$ of $X$, then there exists a sequence of elementary collapses collapsing $X$ to $X_c$.
\end{theorem}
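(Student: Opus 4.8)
The plan is to prove the two assertions together, in both cases realizing the edges of $M$ as elementary collapses and using acyclicity of $G_M$ to put them in a workable order. The organizing device is a \emph{compatible linear order} on the faces of $X$. Since $M$ is acyclic, $G_M$ is a directed acyclic graph, and the standard correspondence between acyclic matchings and discrete Morse functions (Forman, Chari) lets me list the faces as $\sigma_1, \ldots, \sigma_N$ so that (i) every initial segment $\{\sigma_1, \ldots, \sigma_i\}$ is a subcomplex of $X$, i.e.\ $\tau \subset \sigma$ forces $\tau$ to precede $\sigma$, and (ii) each matched pair $(\tau, \sigma) \in M$ occurs in consecutive positions with $\tau$ immediately before $\sigma$. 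The existence of such an order is precisely where ``$G_M$ has no directed cycle'' is used: a directed cycle would obstruct any consistent ordering of the matched pairs.

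For the first assertion I would build the homotopy type through the filtration by subcomplexes $X_0 \subset X_1 \subset \cdots \subset X_N = X$, where $X_i = \{\sigma_1, \ldots, \sigma_i\}$ is increasing by (i), and track how the homotopy type changes at each step. If $\sigma_i$ is critical, then by (i) its entire boundary already lies in $X_{i-1}$, so $X_i$ is obtained from $X_{i-1}$ by attaching a single cell of dimension $\dim \sigma_i$. If $\sigma_i$ is matched, then by (ii) it is introduced together with its partner as a consecutive pair $\sigma_j = \tau$, $\sigma_{j+1} = \sigma$; in $X_{j+1}$ the face $\tau$ is free with unique coface $\sigma$ (all other cofaces of $\tau$ carry larger index and are not yet present), so passing from $X_{j-1}$ to $X_{j+1}$ is an elementary expansion, hence a homotopy equivalence attaching no cell. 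Composing from the empty complex up to $X$ exhibits $X$ as homotopy equivalent to a CW complex $X_M$ whose cells are in dimension-preserving bijection with the critical faces.

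For the second assertion, assume the critical faces form a subcomplex $X_c$ and run the same order in reverse, deleting the non-critical faces from the top down. The key point is that a matched face can never be a proper subset of a critical face: if $\sigma$ is matched and $\sigma \subset \rho$ with $\rho \in X_c$, then, $X_c$ being closed under subsets, $\sigma \in X_c$ as well, contradicting that $\sigma$ is matched. Hence every coface of a matched face is itself non-critical and so carries a larger index, meaning it has already been removed by the time we reach the matched face. Therefore, when we delete a consecutive pair $(\tau, \sigma)$, the face $\sigma$ is currently maximal and is the unique remaining coface of $\tau$, so each deletion is a genuine elementary collapse; since we never touch $X_c$, the process terminates exactly at $X_c$, giving $X \searrow X_c$. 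This step can equivalently be phrased as an induction on the number of matched pairs, where acyclicity together with the subcomplex hypothesis guarantees, at each stage, a matched pair that forms an elementary collapse, after which the restricted matching is again acyclic with the same critical subcomplex.

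The main obstacle is the existence of the compatible order in the first paragraph, equivalently the existence at each inductive stage of a matched pair realizing an elementary collapse; this is the only place the acyclicity hypothesis does real work, and I would establish it by the standard alternating (gradient) path argument, the absence of a closed such path being exactly the acyclicity of $G_M$. I also expect to emphasize that the subcomplex hypothesis in the second assertion is essential and not merely convenient: without it a matched face may sit below a critical face, so the top-down deletion can get stuck before reaching $X_c$, and indeed small acyclic matchings whose critical cells do \emph{not} form a subcomplex admit no elementary collapse at all. Everything else, namely that an elementary expansion preserves homotopy type and that removing a free face with its unique coface is an elementary collapse, is routine from the definitions in Section~\ref{section:DMT}.
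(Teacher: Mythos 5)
The paper does not prove this theorem: it is quoted from Forman and from Chari/Kozlov, so there is no internal argument to compare yours against. Your proposal is, in outline, the standard proof from those references, and I see no gap in it. The compatible linear order is most cleanly obtained by contracting each matched pair to a single element of the face poset; the resulting relation is acyclic exactly when $G_{M}$ is, and any linear extension, with each contracted pair re-expanded as $\tau$ immediately followed by $\sigma$, has your properties (i) and (ii) --- this is equivalent to, and a little tidier than, the alternating-path formulation you sketch. From there the filtration argument gives the Morse complex (the one point deserving explicit mention is that the attaching maps of later critical cells must be transported through the accumulated homotopy equivalences to actually assemble $X_M$), and your key observation for the second assertion --- that a matched cell can never be a proper face of a critical cell once the critical cells form a subcomplex, so every face properly containing a matched $\tau$ other than its partner $\sigma$ is non-critical and carries a strictly larger index --- is exactly what makes the reverse-order deletion a genuine sequence of elementary collapses terminating at $X_c$.
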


One key application of this theorem is in computational topology. Indeed it is often the case that discrete Morse theory gives us a way to find a homotopy equivalence between a given simplicial complex and a much smaller CW-complex on which the homology groups are easier to compute. 

%

Finally, we recall the following standard tool to construct acyclic matchings.

\begin{theorem}[Patchwork theorem {\cite[Theorem 11.10]{kozlov2007combinatorial}}]\label{teo:patchwork}
  Let $P$ be the poset of faces of a simplicial complex $X$, and let $\varphi \colon P \to Q$ be a poset map.
  For all $q \in Q$, assume to have an acyclic matching $M_q \subseteq E$ that involves only elements of the subposet $\varphi^{-1}(q) \subseteq P$.
  Then the union of these matchings is itself an acyclic matching on $P$.
\end{theorem}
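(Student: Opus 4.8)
The final statement to prove is the Patchwork theorem: given a poset map $\varphi \colon P \to Q$ and acyclic matchings $M_q$ each supported on a single fiber $\varphi^{-1}(q)$, the union $M = \bigcup_{q \in Q} M_q$ is an acyclic matching on $P$. The plan is to verify the two defining properties in turn: first that $M$ is a matching, and then, the substantive part, that $M$ is acyclic.

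That $M$ is a matching is essentially immediate. Each $M_q$ is a matching involving only faces in $\varphi^{-1}(q)$, and since $\varphi$ is a function, the fibers $\varphi^{-1}(q)$ partition $P$. Hence no face can appear in two different $M_q$, and within a single $M_q$ no face appears more than once by the matching hypothesis. So every face of $X$ lies in at most one edge of $M$, and $M \subseteq E$.

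The heart of the argument is acyclicity, and this is where I expect the one real subtlety to lie. Suppose for contradiction that $G_M$ contains a directed cycle. Recall how edges of $G$ are oriented in $G_M$: an edge $(\tau, \sigma)$ with $\tau \subset \sigma$ and $\dim \sigma = \dim \tau + 1$ is oriented upward, from $\tau$ to $\sigma$, unless the pair is matched (lies in $M$), in which case it is oriented downward, from $\sigma$ to $\tau$. Thus a directed cycle must alternate between upward (unmatched) steps and downward (matched) steps; indeed it visits a sequence of faces and the key observation is that along any directed edge of $G_M$ the value of $\varphi$ cannot strictly decrease. For an unmatched upward edge $\tau \to \sigma$ we have $\tau \subset \sigma$, so $\tau \leq \sigma$ in $P$, and since $\varphi$ is a poset map $\varphi(\tau) \leq \varphi(\sigma)$, i.e. $\varphi$ weakly increases. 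For a matched downward edge $\sigma \to \tau$, the pair $(\tau, \sigma)$ lies in some $M_q$, which by hypothesis involves only faces of $\varphi^{-1}(q)$; hence both $\tau$ and $\sigma$ lie in the same fiber and $\varphi(\tau) = \varphi(\sigma) = q$, so $\varphi$ stays constant.

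I would then close the argument as follows. Traversing the directed cycle once returns to the starting face, so the net change in $\varphi$ around the cycle is zero; but each step was shown to be weakly increasing in $\varphi$, so in fact $\varphi$ must be constant on every face visited by the cycle. Therefore the entire cycle lies within a single fiber $\varphi^{-1}(q)$. Now all the matched edges of this cycle belong to $M_q$ (as just argued), and all the unmatched edges connect two faces of $\varphi^{-1}(q)$, so they are edges of the Hasse diagram restricted to $\varphi^{-1}(q)$ oriented exactly as they are in $(G_q)_{M_q}$. Consequently the cycle is a directed cycle in the oriented graph $(G_q)_{M_q}$ associated to the matching $M_q$ on the subposet $\varphi^{-1}(q)$, contradicting the acyclicity of $M_q$. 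This contradiction shows $G_M$ has no directed cycle, so $M$ is acyclic, as claimed. The only point requiring care is the bookkeeping that a cycle confined to one fiber really is a cycle for the local oriented graph $(G_q)_{M_q}$ rather than merely for the global $G_M$; this follows because the orientation of each edge depends only on whether it lies in $M$, and for edges inside $\varphi^{-1}(q)$ membership in $M$ coincides with membership in $M_q$.
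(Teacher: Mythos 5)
Your proof is correct, and it is essentially the standard argument for this theorem (the paper itself gives no proof --- it quotes the Patchwork theorem directly from Kozlov's book as a known tool). The two key points are exactly right: $\varphi$ weakly increases along every directed edge of $G_M$ (upward unmatched edges by the poset-map property, downward matched edges are constant since each $M_q$ lives inside a single fiber), so poset antisymmetry forces any directed cycle into one fiber $\varphi^{-1}(q)$, where its orientation agrees with that of $G_{M_q}$ because membership in $M$ and in $M_q$ coincide for edges inside the fiber --- contradicting the acyclicity of $M_q$. One cosmetic remark: the phrase ``the net change in $\varphi$ around the cycle is zero'' is numerical language that does not literally apply in a poset; the clean statement is that the cycle gives $q_0 \leq q_1 \leq \cdots \leq q_k = q_0$, whence all values are equal by antisymmetry, which is what your argument in fact uses.
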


Another way to look at acyclic matchings is through the notion of collapsibility. As already said, an \emph{elementary collapse} is simply the removal of two simplices $\sigma$ and $\tau$ such that
\begin{itemize}
 \item $\dim \sigma = \dim \tau +1$,
 
 \item the only simplex containing $\sigma$ is $\sigma$ itself,
 
 \item the only simplices containing $\tau$ are $\sigma$ and $\tau$, in which case $\tau$ is called a \emph{free face}.
\end{itemize}

We say that a complex is \emph{collapsible} if, through a series of elementary collapses, it can be reduced to a single vertex. As a weaker notion we say that a complex is $d$-collapsible provided it can be reduced to a complex of dimension less than $d$ by a sequence of elementary collapses.

An \emph{elementary anticollapse} \cite{cohen2012course}, sometimes also called expansion, is the dual operation, i.e. the gluing of two simplices $\sigma'$ and $\tau'$ such that
\begin{itemize}

 \item $\dim \sigma' = \dim \tau' +1$,
 
 \item $\tau'$ is not in $X$,
 
 \item the only facet of $\sigma'$ not contained in $X$ is $\tau'$.
\end{itemize}

 If $X$ is on $n$ vertices we say that it is anticollapsible if, through a series of elementary anticollapses, it can be expanded to the simplex on $n$ vertices. As a weaker notion a complex is $d$-anticollapsible if there is a sequence of anticollapses so that the resulting complex has complete $d$-skeleton. It should be noted that, while it is always possible to perform an elementary anticollapse that adds a new vertex, we are prohibiting these moves while talking about anticollapsibility. 

The combinatorial encoding of a set of collapses is best provided by a matching consisting of a collection of pairs of cells $(\tau,\sigma)$, which it is easy to prove to be an acyclic matching. In fact, a standard algorithm to find an acyclic matching is the Random Discrete Morse algorithm of Benedetti and Lutz \cite{benedetti2014random} which uses elementary collapses. This algorithm is 	implemented in \polymake \cite{polymake:2000}. The Random Discrete Morse algorithm begins with a simplicial complex $X$ and performs elementary collapses at random dimension-by-dimension. If the collapsing sequence reaches a point where there are no free faces, a top-dimensional face is selected uniformly at random, marked as a critical cell and deleted. Then this same procedure is performed until only a single vertex remains. The algorithm returns the number of critical cells in each dimension; the list of critical cells in each dimension of a complex with an acyclic matching is called a \emph{Morse vector}. A complex will have a Morse vector $(1, 0, \ldots , 0)$ with respect to some acyclic matching if and only if it is collapsible, so the Random Discrete Morse algorithm may be used to verify collapsibility. 

In at least one particular case the Random Discrete Morse algorithm may also be used to verify that a complex is not collapsible. If a complex is $d$-dimensional and the Random Discrete Morse algorithm returns a Morse vector with at least one critical cell in dimension $d$ then the complex is not collapsible, in fact in such a case the complex is not even $d$-collapsible. This is implied by the following definition and lemma. 

\begin{definition}
A $d$-dimensional simplicial complex $L$ is called a \emph{core} provided that every $(d - 1)$-dimensional face of $L$ is contained in at least two $d$-dimensional faces of $L$.
\end{definition}

\begin{lemma}\label{Lemma:core}
Let $X$ be a $d$-dimensional complex. $X$ is $d$-collapsible if and only if $X$ does not contain a $d$-dimensional core.
\end{lemma}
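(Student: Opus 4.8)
The plan is to prove both implications by identifying the obstruction to collapsing away all top-dimensional faces with the existence of a core. First I would record the basic observation that, since $X$ has dimension $d$, a $(d-1)$-face is free precisely when it is contained in exactly one $d$-face, and that a $d$-face can only be discarded in an elementary collapse in which it plays the role of the larger cell $\sigma$ (there being no $(d+1)$-faces to absorb it as a free face). Consequently a sequence of elementary collapses can lower the dimension below $d$ only after every $d$-face has been removed, and each such removal is effected by a collapse $(\tau,\sigma)$ in which $\tau$ is a free $(d-1)$-face and $\sigma$ is the unique $d$-face containing it.

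For the forward direction (a core obstructs $d$-collapsibility) I would argue by contradiction. Suppose $L \subseteq X$ is a $d$-dimensional core and that some sequence of elementary collapses reduces $X$ below dimension $d$. Since every $d$-face of $L$ must eventually be removed, consider the \emph{first} collapse in the sequence that deletes a $d$-face $\sigma \in L$; its free face $\tau$ is a $(d-1)$-subface of $\sigma$ and hence lies in $L$, as $L$ is closed under taking subsets. Because $L$ is a core, $\tau$ is contained in at least two $d$-faces of $L$, and because this is the first collapse touching a $d$-face of $L$, all of these $d$-faces are still present in the current complex. Thus $\tau$ is contained in at least two $d$-faces at that moment and cannot be free, contradicting the validity of the collapse.

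For the reverse direction I would run a greedy collapsing procedure: as long as the current complex contains a free $(d-1)$-face, collapse it together with the unique $d$-face containing it. Each step destroys one $d$-face, so the procedure terminates in a complex $Y \subseteq X$ having no free $(d-1)$-face. If $Y$ has no $d$-faces then $\dim Y < d$ and $X$ is $d$-collapsible, as desired. Otherwise I would let $L$ be the subcomplex of $Y$ generated by its $d$-faces and verify that $L$ is a core: every $(d-1)$-face of $L$ lies in some $d$-face of $Y$, and since no $(d-1)$-face of $Y$ is free, such a face must lie in at least two $d$-faces of $Y$, which are precisely the $d$-faces of $L$ containing it. Hence $X$ would contain a $d$-dimensional core, and taking the contrapositive yields the implication.

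The routine bookkeeping, namely termination of the greedy process and the identification of free $(d-1)$-faces with $(d-1)$-faces lying in exactly one $d$-face, is straightforward. The step requiring the most care is the forward direction, where one must permit the collapsing sequence to interleave collapses of all dimensions and to have already removed faces outside $L$; the key point that makes the argument go through is that restricting attention to the first collapse removing a $d$-face of $L$ guarantees that all $d$-faces of $L$, and hence all witnesses to the core condition at $\tau$, remain intact at that moment.
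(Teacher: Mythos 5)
Your proof is correct and follows essentially the same route as the paper's: one direction via the ``first collapse that removes a $d$-face of the core'' contradiction, the other by observing that the pure part of a stuck $d$-dimensional complex is a core. The only differences are cosmetic (you run an explicit greedy $(d-1,d)$-collapsing procedure and spell out why interleaved lower-dimensional collapses are harmless), so no further changes are needed.
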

\begin{proof}
Suppose that $X$ is not $d$-collapsible, then there exists some sequence of collapses that results in a complex $X'$ that is still $d$-dimensional, but has no collapsing moves possible. Now in this complex every $(d - 1)$-dimensional face is contained in either zero $d$-dimensional faces, in which case it is said to be isolated, or at least two $2$-dimensional faces. Thus the pure part of this complex, that is the subcomplex whose facets are exactly the $d$-dimensional faces, is a core. 

Conversely, suppose that $X$ is $d$-collapsible and contains a core $Y$. Then there exists a sequence of collapses of removing pairs $(\tau, \sigma)$ with $\tau$ of dimension $d - 1$ and $\sigma$ of dimension $d$ that reduce $X$ to a $(d - 1)$-dimensional complex. Let $(\tau, \sigma)$ be the first such pair in the sequence with $\sigma \in Y$. Since $Y$ is a subcomplex, $\tau$ also belongs to $Y$. At the moment we collapse $(\tau, \sigma)$, $\tau$ has degree 1 in $X$, and hence it has degree 1 in $Y$. But then by definition of a core $\tau$ is contained in $\sigma' \neq \sigma$ so that $\sigma' \in Y$. It follows, however, that $\sigma'$ must have been removed from $X$ before $\sigma$, but this contradicts the choice of $\sigma$. 
\end{proof}

If a complex is $d$-dimensional and the Random Discrete Morse algorithm returns a Morse vector with at least one critical cell in dimension $d$, then the complex contains a $d$-dimensional core. Indeed this means that the random collapsing sequence reached a point where the complex was $d$-dimensional but had no free faces. By Lemma \ref{Lemma:core} this means the complex is not $d$-collapsible.

\subsection{Alexander duality and the top dimensions}
Given a simplicial complex $X$, there is a natural way to define an Alexander dual $X^*$. Here we give the definition and main theorem for this duality as described in \cite{BjornerTancer}. 

Let $X$ be a simplicial complex having vertex set $V$. Given a subset $\sigma \subseteq V$ let $\sigma^c=V \setminus \sigma$ denote the complementary vertex set. 

\begin{definition}
 The Alexander dual of $X$ on $V$ is the simplicial complex defined by
 \begin{equation*}
  X^*:=\{ \sigma \subseteq V \mid \sigma^c \notin X \}.
 \end{equation*}

\end{definition}

 It is easy to see that $X^{**}=X$. Furthermore, for simplicial complexes we have the following notion of combinatorial Alexander duality similar to classic Alexander duality for more general topological spaces. 
 
 \begin{theorem}[Combinatorial Alexander duality \cite{Kalai1983}]\label{thm:AlDual}
  Let $X$ be a simplicial complex on $n$ vertices. Then
  \begin{equation*}
   {H}_i(X) \cong {H}^{n-i-3}(X^*).
  \end{equation*}
  where $H^{n - i - 3}(X^*)$ is the $(n - i - 3)$rd cohomology group of $X$.
 \end{theorem}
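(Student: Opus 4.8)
The plan is to establish the isomorphism at the level of chain complexes, exploiting the contractibility of the full simplex together with the complementation involution $\sigma \mapsto \sigma^c$ on subsets of $V$. An alternative, more topological, route would invoke classical Alexander duality on the sphere $\partial \Delta_{n-1} \cong S^{n-2}$ after identifying the complement of $|X|$ with a space homotopy equivalent to $|X^*|$; I prefer the direct combinatorial argument, which is essentially Kalai's, since it makes the dimension shift transparent.

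First I would embed $X$ as a subcomplex of the full simplex $\Delta_{n-1}$ on $V$, i.e.\ the complex consisting of every subset of $V$. Since $\Delta_{n-1}$ is contractible its reduced homology vanishes in every degree, so (recall that throughout $H$ denotes reduced homology) its reduced chain complex is exact. The short exact sequence of chain complexes
\[
0 \longrightarrow C_\ast(X) \longrightarrow C_\ast(\Delta_{n-1}) \longrightarrow C_\ast(\Delta_{n-1}, X) \longrightarrow 0
\]
induces a long exact sequence in homology, and because $H_\ast(\Delta_{n-1}) = 0$ the connecting homomorphism yields isomorphisms $H_{i+1}(\Delta_{n-1}, X) \cong H_i(X)$ for every $i$. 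Here the relative chain group $C_{i+1}(\Delta_{n-1}, X)$ is freely generated by the $(i+2)$-element subsets of $V$ that are \emph{not} faces of $X$, with the relative boundary deleting a vertex and discarding any resulting face that happens to lie in $X$.

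Next I would use complementation to identify this relative complex with the reduced cochain complex of $X^*$. A non-face $\tau$ of $X$ with $|\tau| = i+2$ has complement $\tau^c$ with $|\tau^c| = n - i - 2$, and by definition the condition $\tau \notin X$ is exactly $\tau^c \in X^*$; thus $\tau \mapsto \tau^c$ is a bijection from the generators of $C_{i+1}(\Delta_{n-1}, X)$ onto the $(n-i-3)$-dimensional faces of $X^*$, i.e.\ onto the generators of $C^{n-i-3}(X^*)$. Deleting a vertex from $\tau$ corresponds to adjoining that same vertex to $\tau^c$, so under this bijection the relative boundary operator is carried to the simplicial coboundary operator of $X^*$. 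Matching the two differentials precisely is the \emph{main obstacle}: it requires decorating the bijection with signs $\varepsilon(\tau) = \pm 1$ so that $\tau \mapsto \varepsilon(\tau)\,\tau^c$ is an isomorphism of (co)chain complexes. The needed identity is the elementary but fiddly observation that, for $v \in \tau$, the position of $v$ among the sorted elements of $\tau$ and its insertion position among the sorted elements of $\tau^c \cup \{v\}$ add up to $\rank_V(v)$ up to a fixed constant; choosing $\varepsilon(\tau)$ to absorb the parity of $\sum_{v \in \tau} \rank_V(v)$ then turns every $(-1)^j$ coming from $\partial$ into the correct coboundary sign.

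Granting the sign bookkeeping, the complementation isomorphism gives $H_{i+1}(\Delta_{n-1}, X) \cong H^{n-i-3}(X^*)$, and combining with the isomorphism $H_{i+1}(\Delta_{n-1}, X) \cong H_i(X)$ from the first step yields $H_i(X) \cong H^{n-i-3}(X^*)$, as claimed. A final remark would dispose of the degenerate cases — the void complex, the full simplex, and the handling of the empty face (which is dual to the vertex set $V$) — so that the isomorphism remains correct in the reduced setting in the extreme degrees $i = -1$ and $i = n-2$.
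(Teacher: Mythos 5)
The paper offers no proof of this theorem: it is quoted from \cite{Kalai1983} (following the presentation in Bj\"orner--Tancer) as a known result, so there is no in-paper argument to compare yours against. Your proof is the standard combinatorial one and is sound: composing the isomorphism $H_i(X)\cong H_{i+1}(\Delta_{n-1},X)$ coming from the long exact sequence of the pair and the acyclicity of the full simplex with the complementation isomorphism $C_{i+1}(\Delta_{n-1},X)\cong C^{n-i-3}(X^*)$, $\tau\mapsto\tau^c$, gives exactly the stated duality, and the degree count $|\tau^c|=n-|\tau|$ produces the shift $n-i-3$ correctly. You also correctly isolate the only two delicate points: the sign decoration (your identity, that the position of $v$ in $\tau$ plus its insertion position in $\tau^c\cup\{v\}$ equals $\rank(v)-1$, is precisely what makes $\varepsilon(\tau)=(-1)^{\sum_{v\in\tau}\rank(v)}$ turn $\tau\mapsto\varepsilon(\tau)\,\tau^c$ into a map of complexes), and the augmented conventions in the extreme degrees, where the empty face of one complex pairs with the full vertex set of the other. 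Nothing further is needed.
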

 We don't formally define (reduced) cohomology here, but it suffices for our purposes to just mention that if $X$ is acyclic then all its cohomology groups vanish and that $H^0(X) = 0$ if and only if $X$ is connected.
 
The Alexander dual behaves exceptionally well with respect to collapsibility, indeed the dual of an elementary collapse in $X$ is an elementary anticollapse in $X^*$. This is standard to check, but we prove it in Proposition \ref{Prop:Duality}.

\begin{notation}
Before proving this and later results, we introduce some notations that we use throughout the paper:
 \begin{itemize}
  \item We use parentheses when talking about a set of vertices, e.g. $V=(x_0,x_1,\ldots,x_n)$.
  
  \item We use square brackets when talking about a face of a simplicial complex, e.g. $\sigma=[x_0,x_2]$. With a little abuse of notation if $x_k \notin \sigma$ we will denote by $[x_k,\sigma]$ the face with vertex set $x_k$ and the vertices of $\sigma$, and if $\tau,\sigma$ are both faces $[\tau,\sigma]$ the face with vertex set the union of the vertices of $\sigma$ and $\tau$.
  
  \item We use curly brackets to denote a simplicial complex; we will use the same notation both to list all the faces or only the facets, which one we are using will be clear from the context; e.g. $X=\{\emptyset, [x_0],[x_1],[x_0,x_1]\}$ or $X=\{[x_0,x_1]\}$.
 \end{itemize}

\end{notation}

\begin{proposition} \label{Prop:Duality}
 If $X$ collapses to $Y$ then $X^*$ anticollapses to $Y^*$. In particular if $X$ is collapsible then $X^*$ is anticollapsible.
\end{proposition}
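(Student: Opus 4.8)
The plan is to reduce to the case of a single elementary collapse and then dualize it directly, showing it becomes a single elementary anticollapse. Any collapse from $X$ to $Y$ is a finite sequence of elementary collapses $X = X_0 \to X_1 \to \cdots \to X_k = Y$, and since chains of anticollapses compose, it suffices to treat one elementary collapse and iterate. So I would assume $Y = X \setminus \{\tau, \sigma\}$ where $\tau \subset \sigma$, $\dim \sigma = \dim \tau + 1$, $\sigma$ is a facet, and $\tau$ is free, and prove that $X^*$ anticollapses to $Y^*$.

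First I would identify exactly which faces distinguish $X^*$ from $Y^*$. Because $Y \subseteq X$, the definition of the Alexander dual gives $X^* \subseteq Y^*$, and a set $\rho$ lies in $Y^* \setminus X^*$ precisely when $\rho^c \notin Y$ and $\rho^c \in X$, i.e.\ when $\rho^c \in X \setminus Y = \{\tau, \sigma\}$. Hence $Y^* \setminus X^* = \{\tau^c, \sigma^c\}$: dualizing the collapse adds exactly the two complementary faces.

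Next I would check that these two faces constitute a legitimate anticollapsing pair. Complementation reverses inclusion, so $\sigma \supset \tau$ yields $\sigma^c \subset \tau^c$, and the identity $\dim \rho^c = n - \dim \rho - 2$ gives $\dim \tau^c = \dim \sigma^c + 1$. Thus $\tau^c$ plays the role of the larger face $\sigma'$ and $\sigma^c$ that of the new free face $\tau'$. Two conditions then remain: that $\sigma^c \notin X^*$, which is immediate since $(\sigma^c)^c = \sigma \in X$; and that $\sigma^c$ is the \emph{unique} codimension-one face of $\tau^c$ missing from $X^*$. The codimension-one faces of $\tau^c$ are the sets $\tau^c \setminus \{v\} = (\tau \cup \{v\})^c$ for $v \notin \tau$, and such a face lies outside $X^*$ exactly when $\tau \cup \{v\} \in X$. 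This is the crux of the argument, and I expect it to be the main obstacle: one must translate the freeness of $\tau$ into the facet condition of the dual. Freeness says that the only face one dimension above $\tau$ lying in $X$ is $\sigma$ itself, so $\tau \cup \{v\} \in X$ holds only for the single vertex $v$ with $\tau \cup \{v\} = \sigma$, producing the unique missing facet $\sigma^c$. With this in place all conditions of an elementary anticollapse are verified, so $X^*$ anticollapses to $Y^*$.

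Finally, for the ``in particular'' clause, if $X$ is collapsible it collapses to a single vertex $\{[v]\}$ on the same ground set $V$, so by the above $X^*$ anticollapses to $\{[v]\}^*$. A short computation shows $\{[v]\}^*$ consists of all subsets of $V$ except $V$ and $V \setminus \{v\}$, and I would conclude by noting that the single anticollapse adding $\tau' = V \setminus \{v\}$ and $\sigma' = V$ expands this to the full simplex $\Delta_{n-1}$, so $X^*$ is anticollapsible. I would also flag the degenerate situation in which the collapsed facet is all of $V$ (so $\sigma^c = \emptyset$), remarking that it is handled by the reduced/empty-face conventions already in force.
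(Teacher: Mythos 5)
Your proposal is correct and follows essentially the same route as the paper's proof: reduce to a single elementary collapse, observe that dualizing adds exactly the complementary pair $\{\tau^c,\sigma^c\}$, and use the freeness of $\tau$ to show that $\sigma^c$ is the unique missing facet of $\tau^c$, so the pair is a legitimate elementary anticollapse. Your explicit treatment of the endgame (the dual of a single vertex anticollapsing to the full simplex) matches what the paper defers to the remark following the proposition.
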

\begin{proof}
Let $X$ be a simplicial complex on the ground set $V$ with $|V| = n$. We show that an elementary collapse on $X$ corresponds to an elementary anticollapse on $X^*$. Suppose that $\tau = [x_0, \dots, x_k]$ is free in $X$ with unique coface $\tau' = [x_0, \dots, x_k, x_{k + 1}]$ and we perform the elementary collapse removing $\tau$ and $\tau'$ to arrive at $X'$. We show that $(X')^*$ is obtained from $X^*$ by an elementary anticollapse. The claim will then follow by induction.

Since $X' = X \setminus \{\tau, \tau'\}$, we have that $(X')^* = \{\sigma \subseteq V \mid \sigma^c \notin X \} \cup \{\tau^c, (\tau')^c\} = X^* \cup \{\tau^c, (\tau')^c\}$. Thus $(X')^*$ is obtained from $X^*$ by adding $\tau^c = V \setminus [x_0, \dots, x_k]$ and $(\tau')^c = V \setminus [x_0, \dots, x_{k + 1}]$. Therefore $(\tau')^c$ is an $(n - k - 3)$-simplex and $\tau^c$ is an $(n - k - 2)$-simplex with $(\tau')^c \subseteq \tau^c$. Moreover since $\tau, \tau' \in X$, we have that $(\tau')^c, \tau^c$ do not belong to $X^*$. Thus we only have to check that all of the facets of $\tau^c$ different from $(\tau')^c$ are contained in $X^*$. Let $\sigma \subseteq \tau^c$ and suppose that $\sigma \notin X^*$, then $\sigma^c \in X$ and $\tau \subseteq \sigma^c$, but since $\tau$ is free we have that $\sigma^c = \tau'$. Thus $(X')^*$ is obtained from $X^*$ by an elementary collapse. 
\end{proof}

\begin{remark}
To be completely precise when discussing duality and collapsibility we have to allow for the trivial collapse of the empty set as a free face of a simplicial complex with only one vertex. Typically, this case is not considered when discussing collapsible complexes, but observe that the dual of the simplex on $n$ vertices is the empty simplicial complex on ground set $[n]=\{1,\dots,n\}$. Nonetheless, even if we do not allow the trivial collapse the second part of Proposition \ref{Prop:Duality} remains true as the dual of a complex on the ground set $[n]$ with only one vertex is the boundary of the simplex on $n$ vertices with a single $(n - 2)$-dimensional face removed, and this anticollapses in one step to the simplex, with this step dual to the trivial collapse.
\end{remark}

As an application of Alexander duality we see the following proof which is a simple generalization of what was done in 
\cite{bagchi2005combinatorial} in the case of $7$ vertices.

\begin{proposition}\label{Prop:NonExistence}
 Any contractible simplicial complex on $n$ vertices of dimension larger or equal to 
$n-3$ must have at least one free face. 
\end{proposition}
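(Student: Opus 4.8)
The plan is to use combinatorial Alexander duality (Theorem \ref{thm:AlDual}) to translate the statement about free faces into a statement about anticollapses, or more precisely into a statement about the top-dimensional structure of the dual complex. Suppose $X$ is a contractible simplicial complex on $n$ vertices with $\dim X = d \geq n - 3$. The goal is to produce a free face, equivalently to find a face $\tau$ contained in exactly one facet. I would first record the constraint that duality places on $X$ itself: since $X$ is contractible it is acyclic, so $H_i(X) = 0$ for all $i$. By Theorem \ref{thm:AlDual}, $H^{n - i - 3}(X^*) = 0$ for all $i$, which says $X^*$ is acyclic (hence connected, since $H^0(X^*) = 0$). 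The key point is that because $\dim X \geq n - 3$, the dimension of $X^*$ is small: a nonface of $X$ of dimension at most $d$ corresponds to a face of $X^*$ of dimension at least $n - d - 2$, so $X^*$ has faces only in low dimension. Concretely, the facets of $X$ of dimension $d \geq n - 3$ force $X^*$ to have no faces of dimension $n - d - 2$ or smaller being obstructed, and one checks $\dim X^* \leq n - d - 3 \leq 0$ when $d \geq n - 3$.

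Let me be more careful about which dimension the argument lives in. The cleanest route is to look directly at the top dimension of $X$. A face $\tau$ of dimension $d - 1$ is free exactly when it lies in a unique $d$-face; so $X$ has no free $(d-1)$-face precisely when every $(d-1)$-face lies in either zero or at least two $d$-faces, i.e.\ the pure $d$-part of $X$ forms a core in the sense of the definition preceding Lemma \ref{Lemma:core}. I would therefore argue by contradiction: assume $X$ has no free face at all, in particular no free $(d-1)$-face, so the $d$-dimensional facets together with their boundary $(d-1)$-faces form a $d$-dimensional core. The plan is then to show that a $d$-dimensional core on $n$ vertices with $d \geq n - 3$ carries nontrivial top homology, contradicting acyclicity of $X$. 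This is where the smallness of the complement is exploited: when $d$ is this close to $n - 1$, a $(d-1)$-face is a nonface complement of a set of only $n - d + 1 \leq 4$ vertices, and the core condition becomes very rigid.

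For the homological contradiction I would pass through Alexander duality explicitly. Since $X$ is acyclic, so is $X^*$, and $X^*$ has dimension at most $n - d - 3$. For $d = n - 1$ the complex is the full simplex and has free faces trivially (indeed it is collapsible), so the interesting cases are $d \in \{n - 3, n - 2\}$. For $d = n - 2$, $\dim X^* \leq 1$, so $X^*$ is a graph, and an acyclic graph is a tree, which is collapsible; dualizing a collapse of $X^*$ back via Proposition \ref{Prop:Duality} shows $X$ anticollapses from the structure that forces a free face — more directly, a tree has a leaf, and the leaf edge dualizes to a free face of $X$. For $d = n - 3$, $\dim X^* \leq 0$, so $X^*$ is a disjoint union of points, and acyclicity forces it to be a single point; then $X$ is the full $(n-2)$-skeleton boundary configuration, which plainly has free faces. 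The uniform way to phrase all of this is: the dual $X^*$ is an acyclic complex of dimension at most $1$, hence a point or a tree, hence collapsible, hence (by Proposition \ref{Prop:Duality}) $X$ is anticollapsible, and a nontrivial anticollapsible complex that is not already a simplex must admit a free face.

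The main obstacle I anticipate is handling the boundary case $d = n - 3$ cleanly and making sure the dualization argument correctly produces a free \emph{face} of $X$ rather than merely establishing anticollapsibility in the abstract. Proposition \ref{Prop:Duality} tells us $X^*$ collapsible implies $X$ anticollapsible, but I actually need the first \emph{anticollapse step} of $X$, which dualizes to the first collapse step of $X^*$, i.e.\ to removing a free face of $X^*$. The free face of $X^*$ exists because $X^*$ is a nonempty collapsible complex (a tree or point), and its dual is precisely a free face of $X$ — this is the content of the computation in Proposition \ref{Prop:Duality} read in reverse. I would need to verify that $X^*$ is genuinely nonempty and not already the full simplex so that a free face of $X^*$ exists and dualizes correctly, which is where the hypothesis $n \geq d + 3$, i.e.\ $d \leq n - 3$, is used to guarantee $X^*$ is positive-dimensional or at least a single genuine vertex. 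Care with the degenerate empty-complex and single-vertex cases, echoing the Remark after Proposition \ref{Prop:Duality}, will be the fiddly part of writing this up.
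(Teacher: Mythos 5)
Your overall plan (split by dimension, invoke Alexander duality in the hard cases) resembles the paper's, but two of your central claims are false, and both are load-bearing. First, the dimension bound on the dual is backwards. You assert $\dim X^* \le n-d-3$, but a facet of $X^*$ is the complement of a \emph{minimal non-face} of $X$, and a complex of large dimension can still be missing an edge, so $X^*$ can have dimension as large as $n-3$ no matter how large $d$ is; the correct statement is the reverse inequality, namely that $X^*$ \emph{contains} the complete $(n-d-3)$-skeleton because every $(d+2)$-subset is automatically a non-face of a $d$-dimensional $X$. Concretely, for $n=5$ let $X$ be the cone with apex $1$ over the complex on $\{2,3,4,5\}$ with facets $[3,4,5]$ and $[2]$: this is contractible and $(n-2)$-dimensional, yet $[2,3]\notin X$, so $[2,3]^c=[1,4,5]\in X^*$ and $X^*$ is $2$-dimensional, not a graph. (Your own arithmetic is also inconsistent here: for $d=n-2$ your formula gives $n-d-3=-1$, not $1$.) Hence the claims ``$X^*$ is a tree'' for $d=n-2$ and ``$X^*$ is a point'' for $d=n-3$ both fail, and the argument built on them does not start.

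Second, the duality is pointed the wrong way in the final step. A free face of $X^*$ licenses a collapse of $X^*$, which by Proposition \ref{Prop:Duality} dualizes to an \emph{anticollapse} of $X$ --- the addition of two faces to $X$ --- not to a free face of $X$. And the intermediate assertion that ``a nontrivial anticollapsible complex that is not already a simplex must admit a free face'' is precisely what Theorem \ref{theorem2} disproves. To extract a free face of $X$ dually you must instead locate an anticollapse opportunity in $X^*$: a missing face all but one of whose boundary facets are present. That is what the paper's proof does in the only case where it uses duality, $d=n-3$: there $H^0(X^*)\cong H_{n-3}(X)=0$ makes $X^*$ connected, its $1$-skeleton is not complete because $X$ has some $(n-3)$-face, so one finds $x,y,z$ with $[x,z],[y,z]\in X^*$ and $[x,y]\notin X^*$, and then $[x,y,z]^c$ is a free $(n-4)$-face of $X$. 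For $d=n-2$ the paper avoids duality altogether: an $(n-3)$-face lies in at most two $(n-2)$-faces, so if no such face were free the sum of all top-dimensional faces would be a nonzero cycle in $H_{n-2}(X;\Z/2\Z)$, contradicting contractibility. You would need to replace both of your structural claims with arguments of this kind for the proof to go through.
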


\begin{proof}
 
The only simplicial complex of dimension $n-1$ on $n$ vertices is the $(n-1)$-simplex, which clearly has every one of its $(n-2)$-dimensional faces free. And of 
course there are no complexes of dimension bigger than $n-1$ on $n$ vertices.

Let us assume that $X$ is a contractible simplicial complex on $n$ vertices and of 
dimension $n-2$. Then any $(n-3)$-dimensional face  has $n-2$ vertices, so can be contained only in 
$0, 1$ or $2$ $(n-2)$-dimensional faces of $X$. If all the $(n-3)$-dimensional faces are contained in $0$ or $2$ $(n-2)$-dimensional faces then the 
union of all the $(n - 2)$-dimensional faces yields a cycle in the degree $(n-2)$ homology group with $\Z/2\Z$-coefficients which is 
impossible since the complex is contractible. Then we have at least one free $(n - 3)$-dimensional face.

The remaining case is when the complex $X$ is $(n-3)$-dimensional. In this case we can 
look at the Alexander dual $X^*$ of $X$. By Combinatorial Alexander duality \ref{thm:AlDual}, $X^*$ will be a connected complex on $n$ vertices since $X$ is contractible. Moreover, since $X$ is $(n - 3)$-dimensional, some edges of $X^*$ is missing. Thus there exists vertices $x$, $y$, and $z$ in $X^*$ so that $[x, z], [y, z] \in X^*$, but $[x, y] \notin X^*$. But this implies that the simplex $[x,y,z]^c$ is a free $(n - 4)$-dimensional face of $X$. 
\end{proof}

\begin{corollary}\label{Cor:TopDim}
 Given a simplicial complex $X$ on $n$ vertices there does not exist a collapsing sequence which gets stuck at dimension at least $n - 3$. 
\end{corollary}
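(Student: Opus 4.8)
The plan is to reduce the corollary to Proposition \ref{Prop:NonExistence} using the fact that elementary collapses are simple-homotopy equivalences. I would argue by contradiction: suppose some collapsing sequence on $X$ gets stuck at a complex $Y$ of dimension $d \geq n-3$. By the definition of getting stuck, $Y$ has no free faces. On the other hand, since each elementary collapse preserves homotopy type, $Y$ is simple-homotopy equivalent to $X$; in the relevant case where $X$ is contractible (in particular when $X = \Delta_{n-1}$ is the simplex being collapsed), this forces $Y$ to be contractible as well. The whole proof is then just a matter of verifying that $Y$ satisfies the hypotheses of Proposition \ref{Prop:NonExistence} and reading off the contradiction.

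Next I would account for the vertex set, which is the one bookkeeping subtlety. An elementary collapse can delete a vertex (precisely when the free face is a vertex whose unique coface is an edge), so $Y$ need not live on all $n$ vertices; say its vertex set has size $m \leq n$. Its dimension still satisfies $d \geq n-3 \geq m-3$, so $Y$ is a contractible complex on $m$ vertices of dimension at least $m-3$. Proposition \ref{Prop:NonExistence} applied on this smaller ground set then guarantees that $Y$ has a free face, contradicting the assumption that the sequence got stuck at $Y$. Since the dimension bound $d \geq n-3$ dominates $m-3$ for every $m \leq n$, no case is lost in passing to the smaller vertex set, and this handles all of $d \in \{n-3, n-2, n-1\}$ at once (higher $d$ being impossible on at most $n$ vertices).

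I do not expect a serious technical obstacle, as this is a direct corollary; the step that genuinely carries the argument is the invocation of homotopy invariance, which is what converts the purely combinatorial ``no free faces'' condition on $Y$ into the topological contractibility hypothesis that Proposition \ref{Prop:NonExistence} requires. The point deserving care is that contractibility really is used: one can collapse a non-contractible complex on $n$ vertices and get stuck in dimension $n-3$ (for instance a copy of $\partial \Delta_3$ with a collapsible flap attached), so the argument must locate contractibility in $Y$ via the simplex we start from, rather than trying to prove the statement for a genuinely arbitrary starting complex.
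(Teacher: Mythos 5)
Your proof is correct and is essentially the deduction the paper intends: the corollary is stated without proof as an immediate consequence of Proposition \ref{Prop:NonExistence}, using exactly the two facts you supply (a stuck complex has no free faces, and collapses preserve homotopy type, so the stuck complex inherits contractibility from the starting complex). Your two added observations are both worth having. The vertex-count bookkeeping ($Y$ may live on only $m \leq n$ vertices, but $\dim Y \geq n-3 \geq m-3$ still puts $Y$ in the range of the proposition) is a genuine detail the paper glosses over. And you are right that the corollary as literally worded --- for an arbitrary complex $X$ --- is false: for instance, $\partial\Delta_3$ with a pendant triangle attached along an edge to a fifth vertex is a complex on $n=5$ vertices that collapses to $\partial\Delta_3$ and gets stuck there at dimension $2 = n-3$. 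The statement is only ever applied in the paper to $X = \Delta_{n-1}$, where contractibility of the stuck complex is available exactly as you argue, so locating that hypothesis explicitly is the right reading rather than a deviation.
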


%
%

\subsection{Hypertrees}\label{section:hypertrees}
In the 1-dimensional case, a tree is characterized as a graph which is connected and has no cycles. Thus in the language of homology, a graph $G$ is a tree if and only if $H_1(G) = H_0(G) = 0$. This definition was extended by Kalai in \cite{Kalai1983} to the higher-dimensional notion of a $\Q$-acyclic complex; a $\Q$-acyclic complex $X$, also called a hypertree, is a simplicial complex so that $H_i(X; \Q) = 0$ for all $i \geq 0$. 

We point out that in Kalai's original formulation a $d$-dimensional $\Q$-acyclic complex was defined to have complete $(d - 1)$-skeleton. This will not be a requirement for our complexes, though in many of our constructions it will hold and gives an easy way to check that homology vanishes in degrees below $d - 1$.

One way in which $d$-dimensional hypertrees are more interesting that 1-dimensional trees is in the ways that certain equivalent properties for trees generalize to nonequivalent properties for hypertrees. In particular there is the following chain of implications for properties of a hypertree $X$.

\begin{center}
\begin{tikzcd}[column sep=tiny, row sep = large]
& \text{$X$ is non-evasive} \arrow[d] \\
& \text{$X$ is collapsible and anticollapsible} \arrow[dl] \arrow[dr] & \\
\text{$X$ is collapsible} \arrow[dr] & & \text{$X$ is anticollapsible} \arrow[dl] \\
& \text{$X$ is contractible}  \arrow[d]\\
& \text{$X$ is $\Z$-acyclic} \arrow[d] \\
& \text{$X$ is $\Q$-acyclic}
\end{tikzcd}
\end{center}
Non-evasiveness, which we have not yet defined, was first described in \cite{KahnSaksSturevant1984}, and, among other definitions, has the following nice inductive one: 
\begin{itemize}
\item A single vertex is non-evasive.
\item $X$ is non-evasive if and only if there exists a vertex $v$ of $X$ so that both $\link(v, X)$ and $\del(v, X)$ are non-evasive, where
\begin{align*}
 \link(v,X) &=  \{ \sigma \in X \mid v \notin \sigma, \ [v, \sigma] \in X \},\\
\del(v,X) &=  \{ \sigma \in X \mid v \notin \sigma \}.
\end{align*}
\end{itemize}
The inductive definition of non-evasiveness may be used to show the first implication and the others are obvious. Moreover, any tree is non-evasive as any one of its leaves has single vertex link and in the $d = 1$ case a $\Q$-acyclic complex is a tree. So it is clear that all the above properties are equivalent for $d = 1$ and hold exactly for trees.

On the other hand, for $d \geq 2$, none of the implications are reversible in general. For the bottom implication in the chain, recall by the universal coefficient theorem that a complex is $\Z$-acyclic if and only if it is $\Q$-acyclic and $\Z/q\Z$ acyclic simultaneously for every prime $q$. Thus, for example, any triangulation of the projective plane is $\Q$-acyclic but not $\Z/2\Z$-acyclic, so hence not $\Z$-acyclic. The standard such triangulation is given by identifying antipodal faces of the icosahedron to produce a triangulation of the projective plane with 6 vertices, 15 edges, and 10 triangles. 

Continuing from bottom to top,  \cite{BjornerLutz2000} gives an example of a $\Z$-acyclic complex which is not contractible. Any triangulation of the dunce hat gives an example of a contractible, but not collapsible 2-complex. Such triangulations are given by \cite{benedetti2009dunce,zeeman1963dunce}. The dual of a triangulated dunce hat gives an example of a contractible, but not anticollapsible complex. The example of \cite{benedetti2009dunce}, that one can get stuck in dimension 2 when collapsing the $7$-simplex, gives an example of an anticollapsible complex which is not collapsible. The dual of such a complex shows also that there are collapsible complexes that are not anticollapsible. Finally, \cite{adiprasito2017extremal} gives an example of a complex which is evasive, but is nonetheless anticollapsible and collapsible.

\section{Constructions}

\begin{figure}[h]
\centering
 \includegraphics[width=8cm]{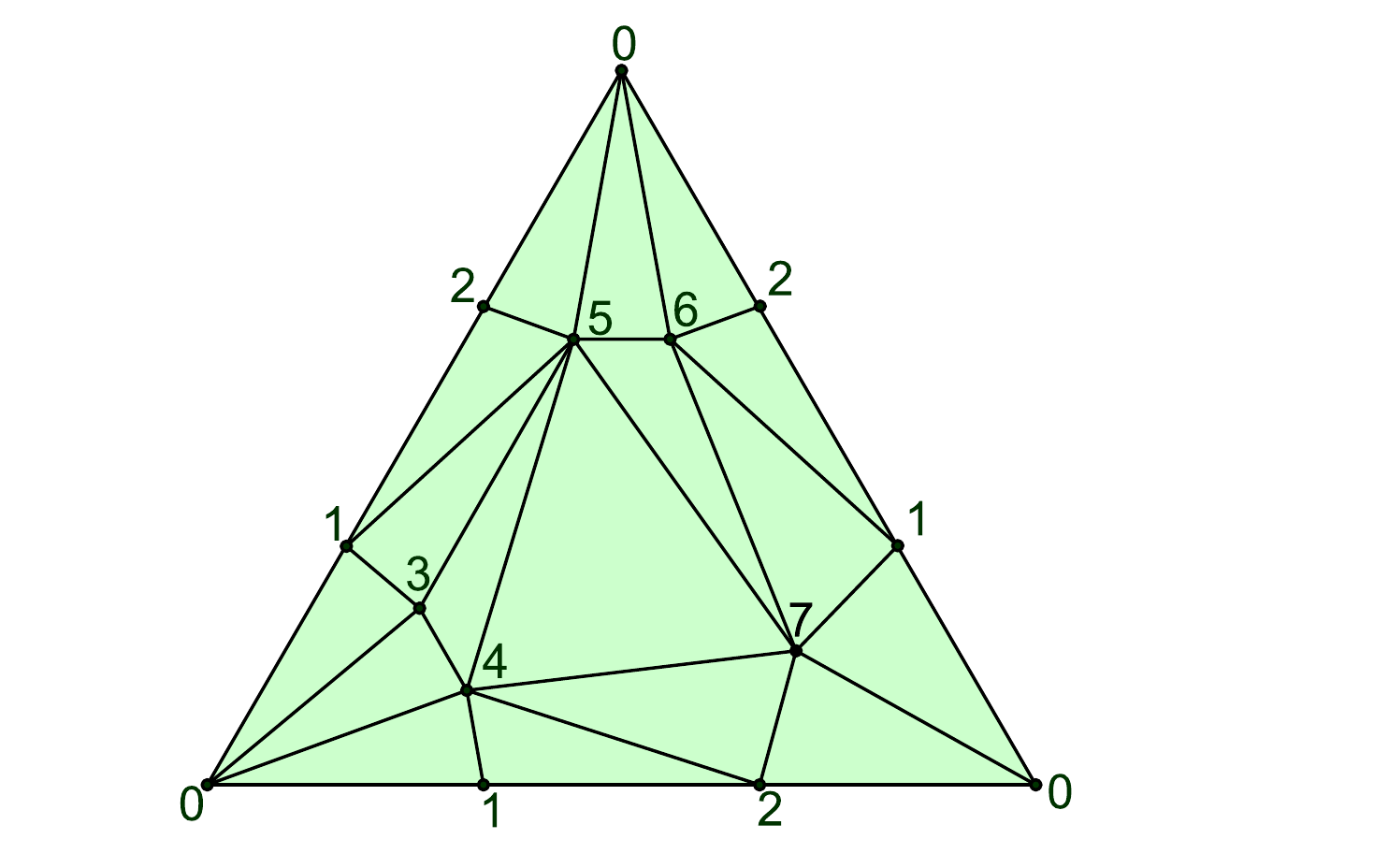}
 \hspace{-2.5em} \caption{ A dunce hat triangulation.}
 \label{fig:1}
\end{figure}

\subsection{Explicit constructions for $n = 8$}\label{sec:Dim8}
Probably the best known example of a contractible but non-collapsible complex is the dunce hat \cite{zeeman1963dunce}, which is known to have triangulations with 8 
vertices. 
Benedetti and Lutz \cite{benedetti2009dunce} presented an $8$-vertex triangulation  of the 
dunce hat, see Figure \ref{fig:1}, that can be found as a subcomplex of, and anticollapses to, a non-evasive ball with $8$ vertices. This in particular implies that this triangulation is anticollapsible.

By Proposition \ref{Prop:NonExistence} we know that any contractible simplicial complex on $8$ vertices in dimension bigger than four has at least one free face. For $d = 3$ and $d = 4$ we considered the dual problem. We looked for 3-dimensional hypertrees and 2-dimensional hypertrees which are collapsible but have no anticollapsing moves possible and found the  following examples given as a list of facets.
\smallskip

$Y_8^2$ := \{ [ 1, 2, 3 ], [ 1, 3, 4 ], [ 1, 4, 5 ], [ 1, 5, 6 ], [ 1, 3, 8 ], [ 1, 6, 8 ], [ 1, 7, 8 ], [ 2, 3, 7 ], [ 3, 4, 6 ], [ 2, 4, 6 ], [ 2, 5, 8 ], [ 2, 6, 7 ], [ 2, 7, 8 ], [ 3, 4, 7 ], [ 3, 5, 7 ], [ 3, 5, 8 ], [ 4, 5, 8 ], [ 4, 6, 8 ], [ 4, 7, 8 ], [ 5, 6, 7 ], [ 1, 2, 6 ] \};

\medskip

$Y_8^3$ := \{ [ 4, 6, 7, 8 ], [ 2, 5, 7, 8 ], [ 1, 5, 7, 8 ], [ 3, 4, 7, 8 ], [ 2, 4, 7, 8 ], [ 2, 3, 7, 8 ],
  [ 1, 3, 7, 8 ], [ 2, 5, 6, 8 ], [ 3, 4, 6, 8 ], [ 1, 4, 6, 8 ], [ 2, 3, 6, 8 ], [ 1, 3, 6, 8 ],
  [ 3, 4, 5, 8 ], [ 2, 4, 5, 8 ], [ 1, 3, 5, 8 ], [ 1, 2, 5, 8 ], [ 2, 3, 4, 8 ], [ 1, 2, 4, 8 ],
  [ 4, 5, 6, 7 ], [ 3, 5, 6, 7 ], [ 2, 5, 6, 7 ], [ 1, 4, 6, 7 ], [ 1, 3, 6, 7 ], [ 1, 2, 6, 7 ],
  [ 1, 4, 5, 7 ], [ 1, 3, 4, 7 ], [ 1, 2, 4, 7 ], [ 3, 4, 5, 6 ], [ 1, 4, 5, 6 ], [ 2, 3, 5, 6 ],
  [ 2, 3, 4, 6 ], [ 1, 2, 4, 6 ], [ 1, 3, 4, 5 ], [ 1, 2, 3, 5 ], [ 1, 2, 3, 4 ] \};

\smallskip

\begin{figure}[h]
\centering
 \includegraphics[width=6cm]{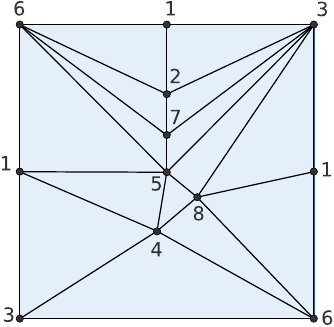}
 \hspace{-2.5em} \caption{ An intermediate step in the construction of $Y_2^8$.}
 \label{fig:3}
\end{figure}

The example $Y_8^2$ was constructed by hand. For $Y_8^3$ we implemented the following generalization of Kruskal's algorithm to generate $d$-dimensional hypertrees on $n$ vertices and checked collapsiblitiy and $d$-anticollapsiblity: 

\begin{enumerate}
	\item Begin with the complete $(d - 1)$-dimensional complex on $n$ vertices. 
	\item While there are fewer than $\binom{n-1}{d}$ faces do:
	\begin{enumerate}
		\item Pick $\sigma$ uniformly at random from among all $d$-dimensional faces in $\Delta_{n-1}$ not yet considered. If $\sigma$ does not complete a cycle in the top homology group with $\Q$-coefficients of the complex so far, add it to the complex. 
		\item Otherwise, do not add $\sigma$ to the complex.
	\end{enumerate}
	\item Return the complex. 
\end{enumerate}

This algorithm is the higher-dimensional analogue of Kruskal's algorithm for finding a minimal-weight spanning tree in an Erd\H{o}s--R\'{e}nyi random graph process with weights indexing the random order in which the edges are added. Even though Kruskal's algorithm classically refers to an algorithm for finding a spanning tree, here it makes sense to consider it as an algorithm for generating a random tree. It is important to note that this algorithm in general will, even in the 1-dimensional case, \emph{not} return a uniform spanning hypertree. 

The complex $Y^3_8$ was found by running 10,000 trials of Kruskal's algorithm. It was one of two examples generated which was collapsible but not anticollapsible.

A later attempt using Kruskal's algorithm with 100,000 runs with $n = 8$ and $d = 2$ also yielded an example of a collapsible but not anticollapsible hypertree which could be used for the base case in place of $Y^2_8$.

The collapsibility of $Y^2_8$ can be proved by hand. The simplicial complex in Figure \ref{fig:3} is a subcomplex of $Y^2_8$ and is clearly collapsible since the edge [3, 6] is free and after we enter the square we can easily collapse away all the triangles. The only thing left to check is that $Y^2_8$ collapse to this complex but this can also easily be done in only six collapsing steps. We leave the details to the reader. 

Alternatively, the reader may use, for example, the Random Discrete Morse algorithm implementation in \polymake \cite{polymake:2000} to verify that $Y_8^2$ and $Y_8^3$ are collapsible, but are dual to non-collapsible complexes, in particular not a single anticollapsing move is possible. 

The duals of these two examples and the triangulation of the dunce hat in Figure \ref{fig:1} gives us a proof of the following:

\begin{proposition}
 There exist simplicial complexes with $8$ vertices in dimension $2,3$ and $4$ that are anticollapsible, and so in particular are contractible, but with no free faces.
\end{proposition}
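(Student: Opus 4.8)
The plan is to obtain the three complexes as Alexander duals of the explicit examples $Y_8^2$ and $Y_8^3$ together with the dunce hat triangulation of Figure \ref{fig:1}. The key tool is Proposition \ref{Prop:Duality}. Since $X^{**}=X$, the correspondence between elementary collapses of $X$ and elementary anticollapses of $X^*$ established in its proof is a bijection, and applying it also to $X^*$ shows that elementary anticollapses of $X$ correspond to elementary collapses of $X^*$. Two consequences are all we need: if $X$ is collapsible then $X^*$ is anticollapsible, and if $X$ admits no elementary anticollapse then $X^*$ admits no elementary collapse, that is, $X^*$ has no free face.

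First I would handle dimensions $3$ and $4$ by dualizing $Y_8^3$ and $Y_8^2$. Both are collapsible, verified above by hand for $Y_8^2$ (via Figure \ref{fig:3}) and by the Random Discrete Morse algorithm for $Y_8^3$, and neither admits a single anticollapsing move. By the correspondence above, $(Y_8^3)^*$ and $(Y_8^2)^*$ are therefore anticollapsible and have no free faces; being Alexander duals on the same ground set they live on all $8$ vertices, since the complement of any single vertex is a $7$-set and hence a non-face of the low-dimensional $Y_8^d$. To read off their dimensions, note that each $Y_8^d$ has complete $(d-1)$-skeleton and omits at least one $d$-face, so its smallest minimal non-face has size $d+1$; the facets of the dual are exactly the complements of the minimal non-faces, whence the top dimension of $(Y_8^d)^*$ is $8-(d+1)-1 = 6-d$. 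Thus $(Y_8^3)^*$ is $3$-dimensional and $(Y_8^2)^*$ is $4$-dimensional, as required.

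For dimension $2$ I would use the dunce hat triangulation $D$ of Figure \ref{fig:1} itself, not a dual. It is anticollapsible since, as recalled above, it anticollapses to a non-evasive $8$-vertex ball. That it has no free face is structural rather than computational: the underlying space of the dunce hat has no boundary and its only non-manifold locus is the seam edge coming from the identified boundary of the model triangle, so in the triangulation every edge lies in exactly two triangles away from the seam and in three along it, while every vertex lies in several edges; hence no face is free. Finally, contractibility of all three complexes is immediate, since an anticollapsible complex is simple-homotopy equivalent to a simplex and hence to a point.

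The conceptual part is short once Proposition \ref{Prop:Duality} is available; the main obstacle is the finite but genuinely case-based verification that $Y_8^2$ and $Y_8^3$ are collapsible and admit no anticollapsing move. Collapsibility is certified by an explicit collapsing sequence, or equivalently by the Random Discrete Morse algorithm returning the Morse vector $(1,0,\dots,0)$; and the statement that $Y_8^d$ has no anticollapsing move is, by the correspondence, exactly the statement that its dual has no free face, which can be checked face by face or with \polymake. I would also confirm that $Y_8^2$ and $Y_8^3$ really do have complete $(d-1)$-skeleton, since the dimension count for the duals relies on it.
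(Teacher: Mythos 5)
Your proposal is correct and follows essentially the same route as the paper: the duals of $Y_8^2$ and $Y_8^3$ supply the $4$- and $3$-dimensional examples via Proposition \ref{Prop:Duality}, and the Benedetti--Lutz dunce hat triangulation (anticollapsible because it anticollapses to a non-evasive $8$-vertex ball) supplies the $2$-dimensional one. The only difference is that you spell out the dimension count for the duals and the free-face check for the dunce hat, which the paper leaves implicit.
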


\subsection{Induction}

We now want to prove the inductive step. That is, given a $d$-dimensional on $n$ vertices simplicial complex $X$, which is anticollapsible and non-collapsible, we want to construct $X'$ which is $(d + 1)$-dimensional on $(n + 1)$ vertices while still being anticollapsible and non-collapsible. To do so we need the following construction.

\begin{definition}\label{def:Ind}
 Let $X$ be a simplicial complex of dimension $d$ on $n$ vertices $(x_1,\ldots ,x_n)$ and let $x := x_i$ be one of them. Given a label $a$ we will denote by $X_{x,a}$ the simplicial complex $X$ where the vertex $x$ is labeled by $a$. We then define:
 \begin{equation}
  X^1_{x}=\{[a]\} * X_{x,b} \cup \{[b]\} * X_{x,a}
 \end{equation}
Where $*$ is the join of two complexes. The faces of the join are the union of a face of the first complex and a face of the second one. 

$X^1_{x}$ is a simplicial complex on $n+1$ vertices $(a,b,x_1, \ldots, x_{i-1}, x_{i+1}, \ldots, x_n)$  of dimension $d+1$.
\end{definition}

\begin{figure}\label{figure:2}
\centering
 \includegraphics[width=12cm]{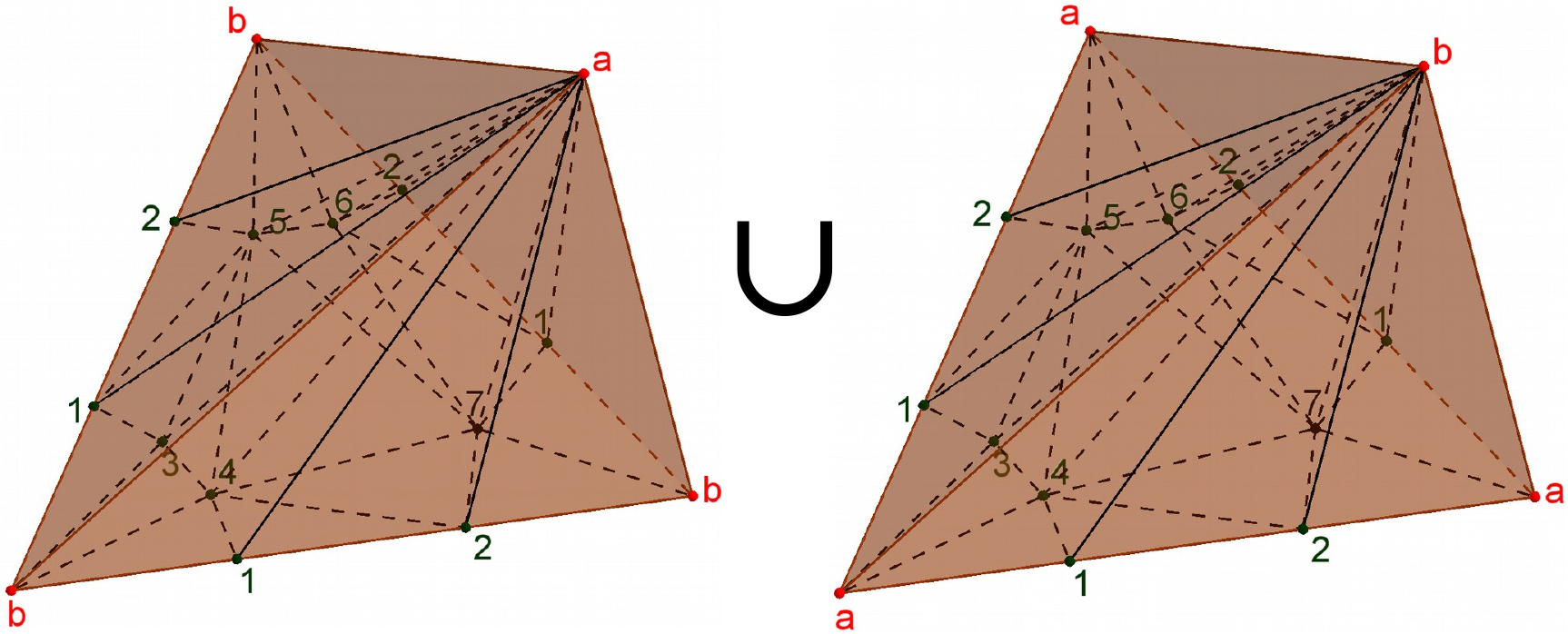}
 \captionsetup{width=.9\linewidth}
 \caption{Construction of Definition \ref{def:Ind} applied to the dunce hat triangulation of Figure~\ref{fig:1} with $0$ as the special vertex.}
\end{figure}

 This construction has also a nice presentation in the Alexander dual, in particular there is a bijection between the facets of $X^*$ and the facets of $X^{1*}_x$, where a cell $\sigma$ is sent to itself if it does not contain $x$, otherwise if it is of the form $\sigma=[x,\sigma']$ it is sent to $[a,b,\sigma']$.

 We are now going to show that many interesting properties are preserved while going from $X$ to $X^1_x$, especially those we are interested in: contractibility, non-collapsibility, and anticollapsibility. 

\begin{lemma}
 Let $X$ be a $d$-dimensional simplicial complex with no free faces, then for any vertex $x$ of $X$, we have that $X^1_x$ has no free faces.
\end{lemma}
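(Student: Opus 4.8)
The plan is to make the faces of $X^1_x$ completely explicit and then reduce freeness in $X^1_x$ to freeness in $X$ by a direct count of proper cofaces, using that a nonempty face is free exactly when it has a single proper coface. Writing $D=\del(x,X)$ and $L=\link(x,X)$ (so $L\subseteq D$, and the faces of $X$ containing $x$ are exactly the $[x,\nu]$ with $\nu\in L$), I would first record that distributing the join over the union in Definition \ref{def:Ind}, together with $X_{x,b}=D\cup(\{[b]\}*L)$ and $X_{x,a}=D\cup(\{[a]\}*L)$, gives
\[ X^1_x = \bigl(\{[a],[b]\}*D\bigr)\ \cup\ \bigl(\{[a,b]\}*L\bigr).\]
Consequently every nonempty face of $X^1_x$ has exactly one of the forms (i) $\mu$ with $\mu\in D$; (ii) $[a,\mu]$ with $\mu\in D$; (iii) $[b,\mu]$ with $\mu\in D$; or (iv) $[a,b,\mu]$ with $\mu\in L$, where $\mu$ is a (possibly empty) face on the remaining vertices.

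Next I would dispose of the two easy cases. For a face of type (i), both $[a,\mu]$ and $[b,\mu]$ lie in $X^1_x$ (since $\mu\in D$), so such a face always has at least two proper cofaces and is never free; this handles every face avoiding $\{a,b\}$ at once. For a face of type (iv), any proper coface must contain $a$, $b$, and $\mu$, hence is $[a,b,\nu]$ with $\nu\in L$ and $\nu\supsetneq\mu$; the assignment $\nu\mapsto[x,\nu]$ is a bijection from these onto the proper cofaces of $[x,\mu]$ in $X$, so $[a,b,\mu]$ is free in $X^1_x$ if and only if $[x,\mu]$ is free in $X$. In particular, taking $\mu=\emptyset$ shows $[a,b]$ is free iff the vertex $[x]$ is free in $X$.

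The heart of the argument is the mixed case (ii), and symmetrically (iii). A proper coface of $[a,\mu]$ either avoids $b$, in which case it is $[a,\nu]$ with $\nu\in D$ and $\nu\supsetneq\mu$, or contains $b$, in which case it is $[a,b,\nu]$ with $\nu\in L$ and $\nu\supseteq\mu$. I would match the first family bijectively with the proper cofaces of $\mu$ in $X$ that avoid $x$ (namely the $\nu\in D$ with $\nu\supsetneq\mu$) and the second family, via $[a,b,\nu]\mapsto[x,\nu]$, with the proper cofaces of $\mu$ in $X$ that contain $x$ (namely the $[x,\nu]$ with $\nu\in L$, $\nu\supseteq\mu$). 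Since these two families partition the proper cofaces of $\mu$ in $X$, the face $[a,\mu]$ has exactly as many proper cofaces as $\mu$, so $[a,\mu]$ is free in $X^1_x$ if and only if $\mu$ is free in $X$. The main obstacle is precisely this bookkeeping: one must split the cofaces of a mixed face across the two pieces of the union and correctly identify the ``adjoin $b$'' cofaces (which leave $\mu$ unchanged, so $\nu=\mu$ is allowed) with the ``adjoin $x$'' cofaces of $\mu$, while the ``stay without $b$'' cofaces force a strict enlargement of $\mu$ inside $D$.

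Finally I would observe that the only nonempty faces not covered by a freeness-preserving correspondence above are the vertices $[a]$ and $[b]$, the $\mu=\emptyset$ instances of (ii) and (iii). Their coface count equals the number of nonempty faces of $X$, which is at least two since $X$ is $d$-dimensional with $d\geq 1$ and hence has at least two vertices; so $[a]$ and $[b]$ are not free. Combining all cases: types (ii), (iii) free faces would force free faces $\mu$ of $X$, and type (iv) free faces would force free faces $[x,\mu]$ of $X$, while types (i) and the vertices $[a],[b]$ are never free. Since $X$ has no free faces, no face of $X^1_x$ is free, which is the claim.
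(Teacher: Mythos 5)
Your proof is correct, and at its core it uses the same casework as the paper's proof: split the faces of $X^1_x$ according to which of $a,b$ they contain, and for a face $\tau$ with $a,b\notin\tau$ exhibit the two cofaces $[a,\tau]$ and $[b,\tau]$. Where you differ is in completeness rather than in strategy. The paper's argument only inspects the $d$-dimensional faces of the $(d+1)$-dimensional complex $X^1_x$ and shows each lies in at least two facets; your decomposition $X^1_x=\bigl(\{[a],[b]\}*\del(x,X)\bigr)\cup\bigl(\{[a,b]\}*\link(x,X)\bigr)$ together with the coface bijections treats faces of every dimension and yields the sharper equivalences that $[a,\mu]$ (resp.\ $[a,b,\mu]$) is free in $X^1_x$ exactly when $\mu$ (resp.\ $[x,\mu]$) is free in $X$. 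This buys two things: the vertices $[a]$, $[b]$ and all lower-dimensional faces are genuinely accounted for, and the argument does not tacitly assume $X$ is pure (the paper's phrase ``contained in at least two facets'' fails when $\tau'$ is itself a facet of $X$, though the conclusion still holds there because $[a,\tau']$ is then a facet of $X^1_x$). The one hypothesis you should make explicit is the one you already invoke: $X$ must have at least two vertices, which you extract from $d\geq 1$; for a single vertex the statement is genuinely false, since $X^1_x$ is then an edge with two free vertices. This is harmless in the paper, where the lemma is only applied with $d\geq 2$.
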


\begin{proof}

Let $\tau$ be a $d$-dimensional face of $X^1_x$. There are then three possible cases:
\begin{itemize}
 \item $a \in \tau$, then $\tau=[a,\tau']$ and $\tau'$ is a $(d-1)$-dimensional face of $X_{x,b}$, in particular it is contained in at least two facets $\sigma$ and $\sigma'$, which implies that $\tau$ is contained in $[a,\sigma]$ and $[a,\sigma']$.
 
 \item $b \in \tau$, which is exactly the same as above.
 
 \item $a,b \notin \tau$, but this clearly implies that $\tau$ is contained in $[a,\tau]$ and $[b,\tau]$, so is not a free face.
\end{itemize}

\end{proof}

%
%
%
%

We turn now to discrete Morse theory and, given an acyclic matching on a simplicial complex $X$, we would like to lift it to $X_x^1$. We will do this in two steps. First, recall that, by definition, we have that $\link(a,X^1_x)=X_{x,b}$. Then, since $X_{x,b}$ is combinatorially isomorphic to $X$, we start by lifting the entire matching to the cells that contain $a$; i.e. given a matching pair $(\tau,\sigma)$ in $X_{x,b}$ we add the pair $([a,\tau],[a,\sigma])$ to our newly defined matching in $X^1_x$. We could now be tempted to do the same with respect to $b,$ but it can be easily seen that in this way we will obtain something not well defined. Instead what we do is to look at the restriction of the initial matching to $\del(x,X)$ and lift it to the cells that do not contain $a$. We describe this construction formally below. 

\smallskip
\noindent
\textbf{Construction of a matching on $X^1_x$}.
Given an acyclic matching $M$ on $X$ and a vertex $x$, we will call by $M_b$ the same matching on $X_{x,b}$. We then construct a matching $M_x^1$ on $X_x^1$ in the following way.

Let $(\tau,\sigma) \in M_{b}$ be a matching pair with $\tau \subset \sigma$, then:
\begin{itemize}
 \item $([a,\tau],[a,\sigma]) \in M_x^1$,
 
 \item If $b \notin \tau$  then $(\tau,\sigma) \in M_x^1$,
 
 \item If $b \notin \sigma$ then $([b,\tau],[b,\sigma]) \in M_x^1$.
\end{itemize}

\begin{lemma}
 The matching defined above is acyclic and, if the critical cells of the matching on $X$ forms a subcomplex $Y$ then, the critical cells of the lifted matching on $X_x^1$ are exactly the 
cells of $Y^1_x$.
\end{lemma}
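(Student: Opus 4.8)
The plan is to prove the two assertions separately: first that $M_x^1$ is acyclic, by two applications of the Patchwork Theorem \ref{teo:patchwork}, and then that its critical cells are exactly the faces of $Y_x^1$, by bookkeeping which faces are matched. For the acyclicity, I would first split the faces of $X_x^1$ according to whether they contain the vertex $a$. The map $\varphi\colon X_x^1 \to \{0<1\}$ sending $F$ to $1$ if $a\in F$ and to $0$ otherwise is a poset map, and each of the three families of pairs lies in a single fiber: the pairs $([a,\tau],[a,\sigma])$ lie over $1$, while the pairs $(\tau,\sigma)$ and $([b,\tau],[b,\sigma])$ lie over $0$. By Theorem \ref{teo:patchwork} it then suffices to check acyclicity on each fiber. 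The fiber $\varphi^{-1}(1)$ consists of the faces $[a,\beta]$ with $\beta \in X_{x,b}$, and $[a,\beta]\mapsto\beta$ is a poset isomorphism onto $X_{x,b}\cong X$ carrying the restricted matching exactly to $M$; since $M$ is acyclic, so is this restriction.

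The remaining fiber $\varphi^{-1}(0)$ is the cone $\{[b]\}*\del(x,X)$, and this is the genuinely delicate case, which I expect to be the main obstacle. It carries a ``bottom'' and a ``top'' copy of the matching $M$ restricted to $\del(x,X)$ (the pairs with $b\notin\sigma$), glued by the \emph{vertical} pairs $(\rho,[b,\rho])$ arising from pairs $(\rho,[x,\rho])$ of $M$ with $\rho\in\link(x,X)$. Here I would argue directly rather than peel off another fiber, since the vertical pairs cross the obvious gradings. First, $M$ restricted to the subcomplex $\del(x,X)$ is acyclic, because any directed cycle in its directed graph is also one in $G_M$. Now consider the poset map $p\colon \{[b]\}*\del(x,X)\to\del(x,X)$ given by $p(F)=F\setminus\{b\}$. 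Every matched (up) edge and every unmatched (down) edge of the fiber projects under $p$ to an edge of $G_{M}$ restricted to $\del(x,X)$ of the same type, \emph{except} the two vertical kinds of edge — the matched $\rho\to[b,\rho]$ and the unmatched $[b,\sigma]\to\sigma$ that deletes $b$ — which project to loops.

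Consequently a directed cycle in the fiber would project to a closed directed walk in the acyclic directed graph of $M|_{\del(x,X)}$, which can only be a union of loops; hence every edge of the putative cycle is of the two vertical kinds. But a vertical up-edge $\rho\to[b,\rho]$ can only be followed by the vertical down-edge $[b,\rho]\to\rho$, and that down-edge exists precisely when $(\rho,[b,\rho])$ is \emph{not} a matched pair, contradicting the up-edge. This rules out a directed cycle, so the fiber is acyclic and, by Patchwork, $M_x^1$ is acyclic.

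For the critical cells I would classify each face of $X_x^1$ by its intersection with $\{a,b\}$ and read off from the three rules whether it is matched. Writing $\iota$ for the relabeling $X_{x,b}\cong X$, a face $[a,\beta]$ containing $a$ is unmatched exactly when $\beta$ is critical for $M$, i.e.\ $\iota(\beta)\in Y$; this yields the faces $[a,\gamma]$ with $\gamma\in\del(x,Y)$ and the faces $[a,b,\rho]$ with $\rho\in\link(x,Y)$. A face $\sigma$ or $[b,\sigma]$ not containing $a$ is unmatched exactly when $\sigma$ is critical for $M$ and avoids $x$, i.e.\ $\sigma\in\del(x,Y)$, giving the faces $\sigma$ and $[b,\sigma]$ with $\sigma\in\del(x,Y)$. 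Comparing these four types against Definition \ref{def:Ind} applied to the subcomplex $Y$ shows that the critical cells are precisely the faces of $Y_x^1$, which in particular is a subcomplex, as required.
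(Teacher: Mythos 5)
Your proof is correct and takes essentially the same route as the paper's: the identical Patchwork decomposition according to whether a face contains $a$, the same ``delete $b$'' projection to handle the fiber over $0$ (you merely spell out why an all-vertical cycle is impossible, a step the paper leaves implicit), and the same case analysis by intersection with $\{a,b\}$ for the critical cells.
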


\begin{proof}

First of all, by construction, we immediately obtain that the collection of edges defined above is a matching. 

The fact that it is acyclic follows from the Patchwork Theorem \ref{teo:patchwork} where $Q=\{0,1\}$ and the poset map is the map that sends a cell to $1$ if it contains $a$ and to $0$ otherwise. This is clearly a well-defined poset map and the matching can be restricted to the fibers, so proving that our matching is acyclic is equivalent to proving that the matching restricted to each fiber is acyclic. The matching on the fiber of $1$ is clearly acyclic because it is equivalent to the starting matching on $X$. We need now to check that the matching on the fiber of $0$, i.e. the matching restricted to the cells that do not contain $a$, is acyclic. We are going to prove this by contradiction.

Let $\sigma_0 \searrow \tau_0 
 \nearrow \sigma_1 \searrow \tau_1 \cdots \nearrow \sigma_k=\sigma_0$ be a cycle in the directed Hasse diagram of $X^1_x$, i.e. for each $i$, $(\tau_i,\sigma_i)$ is a pair in the matching while $(\tau_i, \sigma_{i+1})$ is not a pair in the matching, but $\tau_i$ is a face of $\sigma_{i+1}$.
 
  Let $\sigma_i'$ and $\tau_i'$ be the restrictions of these cells 
 to the vertices different from $b$. By construction we obtain that for each $i$ the pair
 $(\tau_i',\sigma_i')$ is a matched pair in $X$ or $\sigma_i'=\tau_i'$ and 
 equivalently $\tau_i'$ is a face of $\sigma_{i+1}'$ and is not paired with it or the two cells are equal.
 
 Then the restriction is still a cycle in $X$. But since the matching on $X$ is 
 acyclic we must have that all the restrictions are equal to $\sigma_0'$ which is impossible.

 Let us now suppose that the critical cells of the matching on $X$ forms a subcomplex $Y$.
 
 Let $\sigma$ be a cell of $X^1_x$, we will show that $\sigma$ is critical if and only if it belongs to $Y_x^1$. To do so we need to analyze various cases separately.
 \begin{itemize}
  \item $a \in \sigma$. Let us then write $\sigma=[a,\sigma']$. The following chain of implications is true:
  
  \centerline{$\sigma$ is critical in $M_x^1$ $\Leftrightarrow$ $\sigma'$ is critical in $M_b$ $\Leftrightarrow$ $\sigma' \in Y_{x,b}$ $\Leftrightarrow$ $\sigma \in Y_x^1$.}
  
  \item $a \notin \sigma$, $b \in \sigma$. As before let us write $\sigma=[b,\sigma']$ and we obtain the exact same chain:
  
  \centerline{$\sigma$ is critical in $M_x^1$ $\Leftrightarrow$ $\sigma'$ is critical in $M_b$ $\Leftrightarrow$ $\sigma' \in Y_{x,b}$ $\Leftrightarrow$ $\sigma \in Y_x^1$.}
  
  \item $a,b \notin \sigma$. This last case again follow from a simple chain of implications:
  
    \centerline{$\sigma$ is critical in $M_x^1$ $\Leftrightarrow$ $\sigma$ is critical in $M_b$ $\Leftrightarrow$ $\sigma \in Y_{x,b}$ $\Leftrightarrow$ $\sigma \in Y_x^1$.}
 \end{itemize}

\end{proof}

We should notice that, while lifting the matching, we do not really need for $x$ to be a vertex of $X$ or $Y$. In the case $x \notin Y$ by $Y^1_x$ we mean, with a slight abuse of notation, the double cone over $Y$ on the new vertices $a$ and $b$, and the same if $x \notin Y$. This can be observed if we recall that $Y^1_x$ is a union of $\{a\} * Y_{x, b}$ and $\{b\} * Y_{x, a}$ where $Y_{x, a}$ is the labeled complex resulting from relabeling vertex $x$ as $a$, and likewise for $Y_{x, b}$.
 The previous lemma is still true in these special cases.

Using this newly constructed matching and simple homotopy theory we are now able to show that our construction preserves contractibility.

\begin{corollary}\label{contractiblelift}
 Given a contractible simplicial complex $X$ and any $x \in X$ we have that $X^1_x$ is contractible.
\end{corollary}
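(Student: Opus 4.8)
The plan is to use the matching–lift lemma just established together with standard simple-homotopy theory, exactly as the preceding sentence foreshadows. The crucial consequence of that lemma is functorial in nature: whenever a complex $A$ collapses to a subcomplex $B$, we may take the acyclic matching realizing that collapse (its critical cells are exactly the faces of $B$, which do form a subcomplex), lift it to $A^1_x$, and then invoke the discrete Morse theorem to conclude that $A^1_x$ collapses to $B^1_x$. Dually, an elementary expansion $A \nearrow B$, meaning $B \searrow A$, lifts to an expansion $A^1_x \nearrow B^1_x$. Thus the operation $(-)^1_x$ sends every elementary collapse to a collapse and every elementary expansion to an expansion.

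Now I would use that a contractible complex is \emph{simple}-homotopy equivalent to a point: since $X$ is contractible it is simply connected, so its Whitehead group is trivial and every homotopy equivalence $X \to \{\ast\}$ is simple; equivalently (see \cite{cohen2012course}) there is a finite sequence of elementary collapses and expansions transforming $X$ into a single vertex $v$. Applying $(-)^1_x$ to each move and using the previous paragraph yields a sequence of collapses and expansions transforming $X^1_x$ into $\{v\}^1_x$. Finally $\{v\}^1_x$ is either the edge $[a,b]$ (when $v=x$) or the path $a$--$v$--$b$ (when $v \neq x$), and either way it is collapsible, hence contractible. Therefore $X^1_x$ is simple-homotopy equivalent to a point, and so it is contractible.

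The step I expect to require the most care is that the moves in the simple-homotopy sequence need not respect the distinguished vertex $x$: an intermediate collapse can delete $x$ altogether, and an intermediate expansion can introduce brand-new vertices. This is exactly why the construction $(-)^1_x$ was set up to make sense even when $x$ is not a vertex of the complex, interpreting $Y^1_x$ as the double cone over $Y$ with apexes $a,b$, and why it was recorded that the matching–lift lemma remains valid in that generality. The main obstacle is thus the bookkeeping of verifying that every move lifts under $(-)^1_x$ even as the ambient vertex set changes; granting this, the argument goes through.

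As a cross-check one can bypass the simple-homotopy input entirely with a direct homotopy argument: by construction $\link(b, X^1_x) = X_{x,a} \cong X$ is contractible, and coning off a contractible subcomplex does not change the homotopy type, so the inclusion $\del(b, X^1_x) \hookrightarrow X^1_x$ is a homotopy equivalence. Since $\del(b, X^1_x) = \{[a]\} * \del(x, X)$ is a cone, it is contractible, and hence so is $X^1_x$.
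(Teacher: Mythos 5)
Your main argument is correct and is essentially the paper's own proof: both take a sequence of elementary collapses and expansions reducing the contractible complex $X$ to a single vertex $v$ (the paper invokes Whitehead's Theorem~21 directly rather than the Whitehead-group formulation), lift each move through the matching-lift lemma, and observe that $\{v\}^1_x$ is either an edge or a two-edge path, hence contractible; you also correctly isolate the one delicate point, namely that intermediate complexes may lose $x$ or acquire new vertices, which the paper handles with exactly the double-cone convention you cite. Your closing cross-check, however, is a genuinely different and arguably cleaner route that the paper does not take: since $\link(b, X^1_x) = X_{x,a} \cong X$ is contractible, the inclusion $\del(b, X^1_x) \hookrightarrow X^1_x$ is a homotopy equivalence, and $\del(b, X^1_x) = \{[a]\} * \del(x,X)$ is a cone. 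This bypasses simple-homotopy theory and the matching lift entirely, and it is strictly stronger than the paper's nearby remark, which only extracts the homology shift via Mayer--Vietoris (and so by itself would give acyclicity, not contractibility). The trade-off is that the matching-lift machinery is needed in the paper anyway to control collapsibility and anticollapsibility of $X^1_x$, so the corollary comes essentially for free along that route.
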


\begin{proof}
 By [Thm. 21 \cite{whitehead1939simplicial}] any contractible simplicial complex can be reduced to a point by a sequence of elementary collapses and anticollapses. Let $X=X_0 \downarrow Y_1 \uparrow X_1 \downarrow \ldots \downarrow Y_k=\{v\}$ be one of such sequences where by $X_{i-1} \downarrow Y_i$ we mean that $X_{i-1}$ collapses to $Y_i$, while by $Y_i \uparrow X_i$ that $Y_i$ anticollapses to $X_i$. Each step of these sequences is in particular an acyclic matching on a $X_i$. We can then use the lifting of the matching defined above and obtain that for any $x \in X$, $X_x^1$ is homotopy equivalent to $(Y_k)_x^1$. If $v=x$ then $(Y_k)_x^1$ is a segment; otherwise, with the same abuse of notation already discussed, $(Y_k)_x^1$ is the union of two segments attached at one vertex. In both cases $(Y_k)_x^1$ is contractible, which means that $X_x^1$ is contractible. 
\end{proof}

\begin{remark}
It can be shown that for any nonempty complex $X$, the complex $X_x^1$ satisfies for all $i \geq 0$, $H_{i + 1}(X_x^1) = H_i(X)$ and $H_0(X_x^1) = 0$. Recall that $X_x^1 = \{[a]\} * X_{x, b} \cup \{[b]\}*X_{x, a}$. Therefore $X_x^1$ is a union of two contractible complexes whose intersection is $\del(x, X) \cup (\{a, b\} * \link(x, X))$. From this a routine Mayer--Vietoris argument may be applied to show the shift in homology from $X$ to $X_x^1$. We omit the proof as it isn't necessary to the discussion of contractible complexes.

 Many other properties of $X$ are also preserved by  $X^1_x$, for example non-evasiveness. We do not prove non-evasiveness is preserved here, but it is easy to check.
\end{remark}

\begin{lemma}\label{Cor:BigDim}
 Let $X$ be a simplicial complex on $n$ vertices of dimension $d$ without free faces. If $X$ 
anticollapses to $\Delta_{n-1}$, then for any vertex $x \in X$, $X^1_x$ is a simplicial 
complex on $n+1$ vertices of dimension $d+1$ without free faces that anticollapses to the 
simplex $\Delta_{n}$.
\end{lemma}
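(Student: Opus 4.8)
The statement bundles three claims about $X^1_x$: that it is a $(d+1)$-dimensional complex on $n+1$ vertices, that it has no free faces, and that it anticollapses to $\Delta_n$. The first is immediate from Definition~\ref{def:Ind}, and the second is exactly the content of the earlier lemma asserting that $X^1_x$ inherits the no-free-faces property from $X$. So the real work is the anticollapsibility, and the plan is to deduce it from the matching-lifting lemma after passing to the reversed, collapsing, direction.

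First I would reverse the hypothesis: since an anticollapse is by definition the inverse of a collapse, the assumption that $X$ anticollapses to $\Delta_{n-1}$ is equivalent to the statement that $\Delta_{n-1}$ collapses to $X$. A collapsing sequence from $\Delta_{n-1}$ down to the subcomplex $X$ is recorded by an acyclic matching on $\Delta_{n-1}$ whose unmatched (critical) cells are precisely the faces of $X$; in particular the critical cells form a subcomplex. This is exactly the hypothesis required to apply the matching-lifting lemma with base complex $\Delta_{n-1}$, special vertex $x$, and critical subcomplex $Y = X$.

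Applying that lemma, the lifted matching on $(\Delta_{n-1})^1_x$ is acyclic and has critical cells forming exactly $X^1_x$; since these critical cells constitute a subcomplex, the discrete Morse theory theorem yields a collapse of $(\Delta_{n-1})^1_x$ onto $X^1_x$. The key identification is then $(\Delta_{n-1})^1_x = \Delta_n$: relabeling the vertex $x$ of the full simplex as $b$ and coning with the new vertex $a$ already produces the full simplex on $(a, b, x_1, \ldots, \hat{x_i}, \ldots, x_n)$, and symmetrically for the second cone, so the union $\{[a]\} * (\Delta_{n-1})_{x,b} \cup \{[b]\} * (\Delta_{n-1})_{x,a}$ is nothing but $\Delta_n$. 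Hence $\Delta_n$ collapses to $X^1_x$, and reversing this collapse shows that $X^1_x$ anticollapses to $\Delta_n$, as desired.

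The one step that needs genuine care is the identification $(\Delta_{n-1})^1_x = \Delta_n$ together with the verification that the matching-lifting lemma legitimately applies in this degenerate base case where the base complex is itself a simplex and $Y = X$ is a proper subcomplex. I expect no real obstacle here, since the lifting construction and its critical-cell description were established in full generality and the accompanying remark explicitly permits the special vertex to behave as in the cone case; but it is worth confirming that the acyclic matching arising from an arbitrary collapsing sequence of $\Delta_{n-1}$ onto $X$ has critical set exactly $X$, so that the subcomplex hypothesis of the lemma is met. Everything else is bookkeeping.
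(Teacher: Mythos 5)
Your proposal is correct and follows essentially the same route as the paper: reverse the hypothesis to a collapse of $\Delta_{n-1}$ onto $X$, lift the corresponding acyclic matching via the matching-lifting lemma, identify $(\Delta_{n-1})^1_x$ with $\Delta_n$, and invoke the earlier no-free-faces lemma. The paper's own proof is just a terser version of this, citing "the previous lemmas" together with the observation that $(\Delta_{n-1})^1_x$ is combinatorially isomorphic to $\Delta_n$.
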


\begin{proof}
 The statement follows immediately from the previous lemmas. Notice that, for any $n \in \N$, and any vertex $x \in \Delta_{n-1}$, $(\Delta_{n-1})^1_x$ is combinatorially isomorphic to $\Delta_{n}$.
\end{proof}

Lemma \ref{Cor:BigDim} is the main inductive tool to prove Theorem \ref{theorem2}. Namely, Lemma \ref{Cor:BigDim} tells use that if Theorem \ref{theorem2} holds for $(n, d)$ then it holds for $(n + 1, d + 1)$.


We almost have the full proof of Theorem \ref{theorem2}. To finish the proof we will show that if the theorem holds for $(n, 2)$ then it holds also for $(n + 1, 2)$. Fortunately this can be easily accomplished by the following proposition.


\begin{proposition}\label{Prop:SameDim}
 If $X$ is a  simplicial complex on $n$ vertices of dimension $d$ without free faces that 
anticollapses to the simplex $\Delta_{n-1}$, then $Y$ obtained from $X$ by deleting a facet and adding the cone over its boundary is anticollapsible to the simplex $\Delta_{n}$ and has no free faces.
\end{proposition}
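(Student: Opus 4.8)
The plan is to set $\sigma=[x_0,\dots,x_d]$ for the deleted facet and $v$ for the new $(n+1)$-st vertex, so that $Y=(X\setminus\{\sigma\})\cup(\{[v]\}*\partial\sigma)$, where $\partial\sigma$ is the complex of proper faces of $\sigma$ and $\{[v]\}*\partial\sigma$ is the cone over it with apex $v$, whose facets are the $d$-faces $[v,\sigma\setminus\{x_i\}]$. I would verify separately that $Y$ has no free faces and that $Y$ anticollapses to $\Delta_n$. The one structural fact I would use repeatedly is that, since $X$ is $d$-dimensional with no free faces, every $(d-1)$-face of $X$ lying inside some facet is contained in at least two facets; in particular each $(d-1)$-face $\sigma\setminus\{x_i\}$ of $\sigma$ lies in a facet $\sigma'\neq\sigma$ of $X$, and this $\sigma'$ survives in $Y$.

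For the absence of free faces I would argue by cases on a face $\tau\in Y$. If $v\notin\tau$ then $\tau$ is a face of $X$ other than $\sigma$; if moreover $\tau\not\subseteq\sigma$ then none of its cofaces in $X$ were destroyed (only $\sigma$ was removed and $\sigma\not\supseteq\tau$), so $\tau$ is a facet of $Y$ or keeps at least two cofaces, while if $\tau\subsetneq\sigma$ then $\tau$ acquires the cone coface $[v,\tau]$ in addition to a surviving non-$v$ coface, so again $\tau$ is not free. If $v\in\tau$, write $\tau=[v,\rho]$ with $\rho\subsetneq\sigma$; its cofaces in $Y$ are exactly the $[v,\rho']$ with $\rho\subsetneq\rho'\subsetneq\sigma$, and a quick count shows there are either none (when $\rho$ is a facet of $\partial\sigma$, in which case $\tau$ is a facet of $Y$) or at least two. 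The only delicate point is the $(d-1)$-faces of $\sigma$, where I rely on the structural fact above to produce the second coface.

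For anticollapsibility I would proceed in three moves. First, the pair $(\sigma,[v,\sigma])$ is a valid anticollapse: $\sigma\notin Y$, and every facet of $[v,\sigma]$ other than $\sigma$ is one of the $[v,\sigma\setminus\{x_i\}]$, hence already in $Y$. Performing it yields $Y_1=X\cup\{[v,\sigma]\}$, that is, $X$ together with the full simplex on $\{v\}\cup\sigma$. Second, I would lift the given anticollapsing sequence $X\nearrow\Delta_{n-1}$: each of its moves involves only the original vertices, and since all faces of $\sigma$ already belong to $X$, adjoining the fixed subcomplex $\{[v,\sigma]\}$ does not affect the validity of any move. This produces $Z:=\Delta_{n-1}\cup\{[v,\sigma]\}$.

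The third and main step is to anticollapse $Z$ to $\Delta_n$. I would recognize $Z$ as the union of the two simplices $\Delta_{n-1}$ (on the original vertex set $V$) and $\{[v,\sigma]\}$ (on $\{v\}\cup\sigma$), meeting exactly along the common face $\{\sigma\}$, and instead exhibit the reverse collapse $\Delta_n\searrow Z$. The faces of $\Delta_n$ not in $Z$ are precisely the $[v,\rho]$ with $\rho\subseteq V$ and $\rho\not\subseteq\sigma$. Fixing a vertex $c\in\sigma$, this collection is closed under adding or deleting $c$, so matching each such face with its image under toggling $c$ covers exactly these faces and leaves the subcomplex $Z$ as the critical cells. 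Acyclicity is immediate: in any would-be directed cycle the matched pairs have the form $(\tau_i,\tau_i\cup\{c\})$ with $c\notin\tau_i$, so an unmatched covering $\tau_i\subset\sigma_{i+1}=\tau_{i+1}\cup\{c\}$ forces $\tau_i=\sigma_{i+1}\setminus\{c\}=\tau_{i+1}$, making that covering itself a matched pair, a contradiction. By the discrete Morse theorem this matching collapses $\Delta_n$ onto $Z$, so $Z$ anticollapses to $\Delta_n$, and combining the three moves so does $Y$. I expect this last step, collapsing a simplex onto the union of two of its faces that share a common subface, to carry the real content; the no-free-faces verification and the lifting in the earlier paragraphs are essentially bookkeeping given the hypotheses.
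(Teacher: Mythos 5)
Your proposal is correct, and its first two anticollapsing steps coincide exactly with the paper's: first the elementary anticollapse $([\sigma],[v,\sigma])$, then the observation that the given sequence $X\nearrow\Delta_{n-1}$ lifts verbatim because adjoining the fixed subcomplex generated by $[v,\sigma]$ never interferes with a move on the old vertex set. Where you diverge is the final step, getting from $Z=\Delta_{n-1}\cup\{[v,\sigma]\}$ up to $\Delta_n$. The paper notes that $\del(v,Z)=\Delta_{n-1}$ and $\link(v,Z)=\sigma$ are both non-evasive, concludes that $Z$ is non-evasive, and then invokes the implication ``non-evasive $\Rightarrow$ anticollapsible'' from its hierarchy of hypertree properties. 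You instead exhibit an explicit acyclic matching on the faces of $\Delta_n\setminus Z$ (toggling a fixed vertex $c\in\sigma$ on the faces $[v,\rho]$ with $\rho\not\subseteq\sigma$, which is indeed a set closed under that toggle) and apply the discrete Morse theorem to collapse $\Delta_n$ onto $Z$. Both are sound; the paper's route is shorter but leans on the non-evasiveness machinery, while yours is self-contained, makes the dual collapse $\Delta_n\searrow Z$ completely concrete, and its acyclicity argument is the standard one for single-element matchings. You also spell out the case analysis showing $Y$ has no free faces, which the paper dismisses as obvious; your analysis is correct, including the one genuinely delicate case of the $(d-1)$-faces of $\sigma$, where the second coface must come from the no-free-face hypothesis on $X$.
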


\begin{proof}
It is obvious that the complex $Y$ still has no free faces.
 
 We now check anticollapsiblity. Let $v$ be the new vertex of $Y$ and $\sigma$ the facet of $X$ that we have deleted. By construction we can perform the elementary anticollapse $([\sigma], [v,\sigma])$ and call $Y'$ the new complex obtained. We now have that $\del(v,Y')=X$, and since $X$ anticollapses to $\Delta_{n-1}$ we can perform the same anticollapsing moves to $Y'$ obtaining a new complex $Y''$. Now $\del(v,Y'')=\Delta_{n-1}$ and $\link(v,Y'')=\sigma$ which are both non-evasive. In particular $Y''$ is non-evasive and therefore anticollapsible.
\end{proof}

From this we immediately obtain Theorem \ref{theorem2} as Proposition \ref{Prop:SameDim} implies that if Theorem \ref{theorem2} holds for $(n, 2)$ then it holds $(n + 1, 2)$, and the dunce hat on 8 vertices gives the base case to this induction, and the other cases have already been proved. 

\begin{remark}\label{rmk:bistellar}
The change to a complex described in the proof of Proposition \ref{Prop:SameDim} is called a bistellar-0 flip or a stacking move. Moves of this type are described in \cite{PachnerDe, PachnerEn}.
\end{remark}

\section{Conclusions}

A motivation to consider the topic discussed here comes from computational experiments of Joswig, Lutz, Lofano, and Tsuruga \cite{joswig2014sphere} in the context of sphere recognition. The authors examine a \emph{random} collapse procedure on simplices of increasing dimension and quite surprisingly it seems, at least empirically, that the probability to get stuck increases exponentially as $n \rightarrow \infty$. For example their experiments showed that in only 12 out of one billion attempts did a random collapsing sequence fail to reach a single vertex on the 8-simplex, but on the 16-simplex 4 trials out of 10,000 failed to reach a single vertex. In their largest example, 46 out of 50 collapsing sequences on the 25-simplex failed. Here we characterize where a collapsing sequence can get stuck, but the expected behavior of a random collapsing procedure remains unclear.


Questions about random collapses of the simplex are also related to the questions about hypertree enumeration. Rather than analyzing random collapses, one could take a uniform distribution over all $d$-complexes that the $n$-simplex can collapse to and ask how many are collapsible. This is a special case of the problem of enumerating different types of hypertrees. Questions of this type appear to be quite difficult. For $d = 1$, there is the well known enumeration commonly called Cayley's formula that show that there are $n^{n - 2}$ labeled trees on $n$ vertices. However, for larger values of $d$ it is not even known how many $d$-dimensional hypertrees on $n$ vertices there are. The closest we have to an enumeration formula is the following classic result of Kalai giving a weighted enumeration formula.

\begin{theorem}[Kalai \cite{Kalai1983}]
For $n, d \geq 1$ let $\mathcal{T}_{n, d}$ denote the collection of $d$-dimensional $\Q$-acyclic complexes on $n$ vertices with complete $(d - 1)$-skeleton, then 
$$\sum_{X \in \mathcal{T}_{n, d}} |H_{d - 1}(X)|^2 = n^{\binom{n - 2}{d}}.$$
\end{theorem}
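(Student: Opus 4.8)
The plan is to prove this identity as a higher-dimensional matrix-tree theorem, directly generalizing the proof of Cayley's formula from the weighted matrix-tree theorem; the case $d = 1$ is exactly that statement, since $\mathcal{T}_{n,1}$ is the set of spanning trees of $K_n$, each contributes $|H_0| = 1$, and the sum is $n^{n-2} = n^{\binom{n-2}{1}}$.

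Let $\Delta$ be the complete $d$-skeleton of the simplex on $[n]$ and let $\partial = \partial_d$ be its top boundary matrix, with rows indexed by the $(d-1)$-faces and columns by the $d$-faces. First I would convert membership in $\mathcal{T}_{n,d}$ into linear algebra. Since $\Delta$ has complete $(d-1)$-skeleton, a rank computation gives $\rank \partial_{d-1} = \binom{n-1}{d-1}$, hence $\dim_{\Q} \ker \partial_{d-1} = \binom{n}{d} - \binom{n-1}{d-1} = \binom{n-1}{d}$, and moreover $\Im \partial_d = \ker \partial_{d-1}$ over $\Q$ because the $d$-skeleton of a simplex is $(d-1)$-connected. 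From this I would deduce that the complexes $X \in \mathcal{T}_{n,d}$ are exactly those obtained by adjoining to the complete $(d-1)$-skeleton a set $S$ of $\binom{n-1}{d}$ many $d$-faces whose columns form a $\Q$-basis of $\ker \partial_{d-1}$: vanishing of $H_d(X;\Q)$ forces these columns to be independent, and vanishing of $H_{d-1}(X;\Q)$ together with the dimension count forces them to span.

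Next comes the integral bookkeeping, which I expect to be the crux. Fix a reference vertex, say $n$, and let $R$ be the set of $(d-1)$-faces not containing $n$; note $|R| = \binom{n-1}{d} = |S|$. The key lemma is that the coordinate projection $\pi_R$ onto $R$ restricts to a \emph{unimodular} isomorphism of the integral cycle lattice $\ker_{\Z} \partial_{d-1}$ onto $\Z^R$; this follows from the cone structure of $\Delta$ at $n$, which shows that the coefficients of a cycle on faces containing $n$ are determined integrally by its coefficients on $R$. Granting this, for $X$ with $d$-faces $S$ we get
$$|H_{d-1}(X)| = [\ker_{\Z}\partial_{d-1} : \Im_{\Z}(\partial_d|_S)] = |\det \partial_{R,S}|,$$
where $\partial_{R,S}$ is the square submatrix of $\partial$ on rows $R$ and columns $S$. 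This is the step that upgrades the merely rational information (acyclicity over $\Q$) to the exact integral torsion order, and isolating the correct reference set $R$ so that the projection is unimodular is the delicate point.

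With this dictionary in hand, let $\partial_R$ denote the $\binom{n-1}{d} \times \binom{n}{d+1}$ submatrix of $\partial$ on the rows $R$. The Cauchy--Binet formula gives
$$\det(\partial_R \partial_R^T) = \sum_{S} (\det \partial_{R,S})^2,$$
the sum ranging over all $\binom{n-1}{d}$-element column sets $S$. A dependent $S$ contributes $0$ and corresponds to a non-acyclic adjunction, while an independent $S$ contributes $|H_{d-1}(X)|^2$ for the associated $X \in \mathcal{T}_{n,d}$; hence the right-hand side is exactly $\sum_{X \in \mathcal{T}_{n,d}} |H_{d-1}(X)|^2$. It remains to evaluate $\det(\partial_R \partial_R^T)$. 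This matrix is a principal submatrix of the up-Laplacian $L = \partial_d \partial_d^T$ of the complete skeleton, whose spectrum is classical: since $\Delta$ is $\Q$-acyclic with vanishing harmonic space, $L$ acts as $n \cdot \mathrm{Id}$ on $\ker \partial_{d-1} = \Im \partial_d$ and as $0$ on the orthogonal complement $\Im \partial_{d-1}^T$. A computation with this spectrum yields that $\partial_R \partial_R^T$ has eigenvalue $n$ with multiplicity $\binom{n-2}{d}$ and eigenvalue $1$ with multiplicity $\binom{n-2}{d-1}$ (which sum to $\binom{n-1}{d}$ by Pascal's rule), generalizing the reduced Laplacian $nI - J$ of $K_n$; therefore $\det(\partial_R \partial_R^T) = n^{\binom{n-2}{d}}$, completing the proof. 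Compared with the integral lattice lemma, this final determinant evaluation is routine given the known Laplacian spectrum, modulo carefully tracking the multiplicities.
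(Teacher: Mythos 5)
The paper does not prove this statement: it is quoted from Kalai's 1983 paper in the Conclusions as background for the discussion of hypertree enumeration, so there is no in-paper argument to compare against. Your outline is, in essence, Kalai's original proof: reduce $\Q$-acyclicity of $X_S$ to the columns of $\partial_d$ indexed by $S$ forming a basis of $\ker\partial_{d-1}$, identify $|H_{d-1}(X_S)|$ with $|\det\partial_{R,S}|$ via a unimodular projection onto the $(d-1)$-faces avoiding a fixed vertex, apply Cauchy--Binet, and evaluate the resulting Gram determinant using the Laplacian of the complete skeleton. The structure is sound and the numerology checks out (for $n=4$, $d=2$ the matrix $\partial_R\partial_R^T$ really does have spectrum $\{4,1,1\}$, giving determinant $4=4^{\binom{2}{2}}$). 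Two steps deserve more care than you give them. First, for the unimodularity of $\pi_R$ on the integral cycle lattice: injectivity is immediate from the cone structure (the cycle condition at a $(d-2)$-face $\rho$ with $n\notin\rho$ solves integrally for the coefficient on $[n,\rho]$ in terms of the coefficients on $R$), but surjectivity also needs the $\Q$-dimension count $\dim_{\Q}\ker\partial_{d-1}=|R|$ to guarantee that an extension of a given vector on $R$ to a cycle exists at all; integrality of that extension then follows from the same formula. Second, the spectrum of a principal submatrix is not determined by the spectrum of the full matrix, so ``a computation with this spectrum'' is not by itself a proof; the clean route is the identity $\partial_R\partial_R^{T} = nI - \tilde{D}^{T}\tilde{D}$, where $\tilde{D}$ is the $(d-1)$st boundary matrix of the simplex on the remaining $n-1$ vertices (using that the up- plus down-Laplacian of the complete complex equals $n$ times the identity, and that the columns of $\partial_{d-1}$ indexed by $R$ are supported on $(d-2)$-faces avoiding the fixed vertex). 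From this and $\rank\tilde{D}=\binom{n-2}{d-1}$ the eigenvalues $1$ and $n$ with the stated multiplicities follow, completing the argument. With those two points filled in, your proof is correct.
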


This weighted enumeration formula was later extended by Duval, Klivans, and Martin \cite{DuvalKlivansMartin2009} to the case where the $(d - 1)$-skeleton need not be complete in analogy to how Kirchhoff's Matrix Tree theorem generalizes Cayley's formula. Unweighted enumeration remains an open problem, with the best known bounds given by Linial and Peled \cite{LinialPeled2019}. In \cite{LinialPeled2019} the authors conjecture that almost all $d$-dimensional hypertrees, with complete $(d - 1)$-skeleton, are not $d$-collapsible for $d \geq 2$. Similar conjectures could be made for other properties of hypertrees. For instance is it true that almost all collapsible $d$-dimensional hypertrees fail to be anticollapsible for $d \geq 2$?


Our constructions give examples of complexes that are anticollapsible but not collapsible. Naturally, one could ask for contractible complexes that are \emph{neither} anticollapsible nor collapsible. The example $C_3^8$ below is such a complex. This example was found by using Kruskal's algorithm to generate 500,000 examples of 3-dimensional hypertrees on 8 vertices. We point out that for $d \geq 3$ any $d$-dimensional $\Z$-acyclic hypertree with complete 2-skeleton is necessarily contractible by the Whitehead theorem. In our 500,000 runs $C_3^8$ was one of three examples that was neither 3-collapsible nor 3-anticollapsible.

\smallskip
$C_3^8$ = \{ [ 1, 5, 7, 8 ], [ 3, 4, 5, 8 ], [ 1, 2, 6, 7 ], [ 1, 2, 3, 5 ], [ 1, 3, 4, 6 ], [ 2, 4, 7, 8 ], [ 4, 5, 6, 7 ],  [ 2, 3, 7, 8 ], [ 1, 3, 5, 6 ], [ 2, 4, 5, 8 ], [ 1, 3, 4, 8 ], [ 2, 3, 4, 5 ], [ 1, 2, 4, 6 ], [ 2, 4, 6, 7 ],  [ 2, 4, 5, 7 ], [ 1, 3, 5, 7 ], [ 1, 3, 4, 5 ], [ 2, 3, 6, 7 ], [ 3, 5, 7, 8 ], [ 3, 4, 5, 7 ], [ 1, 3, 4, 7 ],  [ 2, 3, 6, 8 ], [ 2, 3, 4, 6 ], [ 1, 3, 7, 8 ], [ 1, 5, 6, 7 ], [ 2, 5, 6, 8 ], [ 4, 6, 7, 8 ], [ 1, 5, 6, 8 ],  [ 2, 3, 5, 6 ], [ 1, 2, 3, 8 ], [ 3, 4, 6, 8 ], [ 1, 2, 5, 7 ], [ 1, 2, 4, 8 ], [ 5, 6, 7, 8 ], [ 3, 4, 6, 7 ] \}

\smallskip

Non-collapsibility and non-anticollapsiblity for this complex may be verified using the implementation of the Random Discrete Morse algorithm in \polymake. For both $C_3^8$ and $(C_3^8)^*$ the Random Discrete Morse algorithm returns a Morse vector with a critical cell in the top dimension.


Even without this example, one can establish that contractible complexes which are neither collapsible nor anticollapsible must exist. Indeed all possible collapsible sequences from a complex on $n$ vertices are finite, but deciding if a complex is contractible is undecidable as it requires deciding if the complex has trivial fundamental group \cite{tancer2016recognition}.

While $C^8_3$ was one of only three examples out of 500,000 randomly generated 3-dimensional trees to be contractible but neither anticollapsible nor collapsible, we suspect that for large $n$ almost all contractible complexes are neither collapsible nor anticollapsible.

\section*{Acknowledgment}
Both authors thank Frank H. Lutz for suggesting this problem and for several helpful comments on an earlier draft of the article. 

\bibliographystyle{amsalpha}
\bibliography{bibliography}

\end{document}